\newtheorem{theorem}{Theorem}
\newtheorem{lemma}{Lemma}
\newtheorem{claim}{Claim}
\newtheorem{problem}{Problem}
\newtheorem{definition}{Definition}
\newcommand{\case}[1]{\noindent {\bf Case #1.}}
\title{{\bf Near Optimal Colourability on Hereditary Graph Families}}
\author{
Yiao Ju\thanks{College of Computer Science, Nankai University, Tianjin 300350, P. R. China.}
\and
Shenwei Huang\thanks{College of Computer Science, Nankai University, Tianjin 300350, P. R. China. Supported by NSFC under Grant 12171256.}
}
\begin{document}

\maketitle

\begin{abstract}
In this paper, we initiate a systematic study on a new notion called near optimal colourability which is closely related to
perfect graphs and the Lov{\'a}sz theta function.
A graph family $\mathcal{G}$ is {\em near optimal colourable} if there is a constant number $c$ such that every graph $G\in\mathcal{G}$ satisfies $\chi(G)\leq\max\{c, \omega(G)\}$, where $\chi(G)$ and $\omega(G)$ are the chromatic number and clique number of $G$, respectively. The near optimal colourable graph families together with the Lov{\'a}sz theta function are useful for the study of the chromatic number problems for hereditary graph families. 
We investigate the near optimal colourability for ($H_1,H_2$)-free graphs. 
Our main result is an almost complete characterization for the near optimal colourability for ($H_1,H_2$)-free graphs
with two exceptional cases, one of which is the celebrated Gy{\'a}rf{\'a}s conjecture.
%To obtain the result, we prove that the family of ($2K_2,P_4\vee K_n$)-free graphs is near optimal colourable for every positive integer $n$ by inductive arguments.%
As an application of our results, we show that the chromatic number problem for ($2K_2,P_4\vee K_n$)-free graphs is polynomial time solvable, which solves an open problem in [K.~K.~Dabrowski and D.~Paulusma. On colouring ($2P_2$, $H$)-free and ($P_5$, $H$)-free graphs. Information Processing Letters, 134:35-41, 2018].
%We give a partial solution to this deciding problem, and there are two subproblems left to solve, one of which is equivalent to the Gy{\'a}rf{\'a}s conjecture. Most of this paper is devoted to solve a subproblem: the family of ($2K_2,P_4\vee K_n$)-free graphs is near optimal colourable for every positive integer $n$.
\end{abstract}

\section{Introduction}\label{sec:intro}

All graphs in this paper are finite and simple. For general graph theory notation we follow~\cite{BM08}. Let $P_n$, $C_n$ and $K_n$ denote the path, cycle and complete graph on $n$ vertices, respectively. 
We denote the complement graph of $G$ by $\overline{G}$. For two vertex disjoint  graphs $G$ and $H$, we write $G+H$ to denote the \emph{disjoint union} of $G$ and $H$, and $G\vee H$
to denote the graph obtained from $G+H$ by adding an edge between every vertex in $G$ and every vertex in $H$. For a positive integer $r$, we use $rG$ to denote the disjoint union of $r$ copies of $G$. A \emph{hole} is an induced cycle on four or more vertices. An \emph{antihole} is the complement of a hole. A hole or antihole is \emph{odd} if it has an odd number of vertices. The graph $K_n-e$ is obtained from $K_n$ by removing an edge. 
The graph $K_1\vee 3K_1$, $(K_2+K_1)\vee K_1$, $K_4-e$, $P_4\vee K_1$, $(K_2+K_1)\vee K_2$ are usually called \emph{claw}, \emph{paw}, \emph{diamond}, \emph{gem} and \emph{HVN}, respectively. 
A \emph{linear forest} is a disjoint union of paths.

%A \emph{linear forest} is a forest whose complements are paths. 

%A \emph{clique} (\emph{stable set}) in a graph $G$ is a set of pairwise adjacent (nonadjacent) vertices in $G$.
We say that a graph $G$ \emph{contains} a graph $H$ if $G$ has an induced subgraph that  is isomorphic to $H$. A graph $G$ is \emph{$H$-free} if $G$ does not contain $H$. For a family $\mathcal{H}$ of graphs, $G$ is \emph{$\mathcal{H}$-free} if $G$ is $H$-free for every $H\in\mathcal{H}$. We write $(H_1,\ldots,H_n)$-free instead of $\{H_1,\ldots,H_n\}$-free. Each graph in $\mathcal{H}$ is called a \emph{forbidden induced subgraph} of the family of $\mathcal{H}$-free graphs. A graph family $\mathcal{G}$ is \emph{hereditary} if $G\in\mathcal{G}$ implies that every induced subgraph of $G$ belongs to $\mathcal{G}$. Clearly, a graph family is hereditary if and only if it is the family of $\mathcal{H}$-free graphs for some graph set $\mathcal{H}$.

A \emph{$q$-colouring} of a graph $G$ is an assignment of colours from $\{1,2,\ldots,q\}$ to each vertex of $G$ such that adjacent vertices receive different colours. We say that a graph $G$ is \emph{$q$-colourable} if $G$ admits a $q$-colouring. The \emph{chromatic number} of a graph $G$, denoted by $\chi(G)$, is the minimum number $q$ for which $G$ is $q$-colourable. The \emph{clique number} of $G$, denoted by $\omega(G)$, is the size of a largest clique in $G$. Clearly,  every graph $G$ satisfies $\chi(G)\geq \omega(G)$. A graph $G$ is \emph{perfect} if $\chi(H)=\omega(H)$ for each induced subgraph $H$ of $G$.
As a generalisation of perfect graphs, Gy{\'a}rf{\'a}s~\cite{Gy87} introduced the $\chi$-bounded graph families. A graph family $\mathcal{G}$ is \emph{$\chi$-bounded} if there is a function $f:\mathbb{N}\rightarrow \mathbb{N}$ such that $\chi(G)\leq f(\omega(G))$ for every $G\in\mathcal{G}$. The function $f$ is called a \emph{$\chi$-binding function} for $\mathcal{G}$.
It follows from a classical result of Erd\H{o}s \cite{Er59} that if a graph $H$ is not a forest, then the family of $H$-free graphs is not $\chi$-bounded. Gy{\'a}rf{\'a}s~\cite{Gy73} conjectured that if a graph $H$ is a forest, then the family of $H$-free graphs is $\chi$-bounded (known as the Gy{\'a}rf{\'a}s conjecture).

%\begin{theorem}[\cite{Er59}]\label{thm:Erdos}
%For any positive integers $k,l\geq 3$, there exists a graph $G$ with $g(G)\geq l$ and $\chi(G)\geq k$.
%\end{theorem}

To determine whether a general graph is $q$-colourable is NP-complete when $q\geq 3$~\cite{Ka72}. This implies that the chromatic number problem for general graphs is NP-complete. However, for some graph families, there exist polynomial time algorithms for the chromatic number problem. For example, Gr{\" o}tschel, Lov{\' a}sz and Schrijver~\cite{GLS84} proved that the chromatic number of a perfect graph can be computed in polynomial time via the Lov{\'a}sz theta function, which is defined as follows:

$$
\begin{aligned}
\vartheta(G):=\max\{\sum\limits_{i,j=1}^n b_{ij}: & B=(b_{ij}) {\rm~ is~ positive~ semidefinite~ with~ trace~ at~ most~ 1,}\\
& {\rm and}~ b_{ij}=0~ {\rm if}~ ij\in E\}.
\end{aligned}
$$
The Lov{\'a}sz theta function can be computed in polynomial time, and satisfies that $\omega(G)\leq\vartheta(\overline{G})\leq\chi(G)$ for any graph $G$~\cite{GLS84}. It follows that $\chi(G)=\vartheta(\overline{G})$ if $G$ is perfect.

In this paper, we define a new notion called near optimal colourable that relates $\chi$-boundedness with computational complexity of graph colouring via the Lov{\'a}sz theta function. 
\begin{definition}[Main Notion]
A graph family $\mathcal{G}$ is said to be near optimal colourable if there is a constant number $c$ such that every graph $G\in \mathcal{G}$ has $\chi(G)\leq \max\{c, \omega(G)\}$. 
\end{definition}

The following theorem shows that near optimal colourability allows one to reduce the chromatic number problem to $k$-colouring for fixed $k$. 
\begin{theorem}\label{thm:reduce_omega}
Suppose that $\mathcal{G}$ is a near optimal colourable family with $c$ being such that every graph $G\in \mathcal{G}$ satisfies
that $\chi(G)\le max\{\omega(G),c\}$. If the $k$-colouring problem for $\mathcal{G}$ is polynomial time solvable for every fixed positive integer $k\le c-1$, then the chromatic number problem for $\mathcal{G}$ is polynomial time solvable.
\end{theorem}
\begin{proof}
Let $G\in\mathcal{G}$. We may calculate $\vartheta(\overline{G})$ in polynomial time~\cite{GLS84}. If $\vartheta(\overline{G})\geq c$, then $\vartheta(\overline{G}) \geq \max\{c,\omega(G)\} \geq \chi(G) \geq \vartheta(\overline{G})$ and so $\chi(G) = \vartheta(\overline{G})$. Now we assume that $\vartheta(\overline{G}) < c$. Then $\omega(G) < c$ and so $\chi(G)\leq c$. So the chromatic number problem is reduced to the $k$-colouring problems for $k\leq c-1$.
% If $\vartheta(\overline{G})>c-1$, since $\chi(G)\leq\max\{c, \omega(G)$\} and $\omega(G)\leq\vartheta(\overline{G})\leq\chi(G)$, we have $\chi(G)=\lceil \vartheta(\overline{G}) \rceil$. If $\vartheta(\overline{G})\leq c-1$, then $\omega(G)\leq c-1$ and $\chi(G)\leq c$, and then the chromatic number problem is reduced to the $k$-colouring problems for $k\leq c-1$.
\end{proof}

Therefore, it is useful to study which hereditary graph classes are near optimal colourable.
It is known that \cite{RS04} the family of $H$-free graph is linearly $\chi$-bounded if and only if $H$ is an induced of $P_4$.
This implies that the family of $H$-free graphs is near optimal colourable if and only if $H$ is an induced subgraph of $P_4$. 
So it is natural to study hereditary graph families defined by two forbidden induced subgraphs.
For example, we (together with Goedgebeur and Merkel) proved that every ($P_6$, diamond)-free graph $G$ satisfies that $\chi(G)\leq \max\{6, \omega(G)$\}, and that the 5-colouring problem for ($P_6$, diamond)-free graphs is polynomial time solvable~\cite{GHJM23}. It then follows from \autoref{thm:reduce_omega} and the known result on the polynomial-time solvibility of 4-colouring $P_6$-free graphs~\cite{CSZ19} that the chromatic number problem for ($P_6$, diamond)-free graphs is polynomial time solvable~\cite{GHJM23}.

%\begin{observation}[\cite{RS04}]\label{obs:linearbound}
%Let $\mathcal{G}$ be the family of $H$-free graphs. If $H$ is an induced subgraph of $P_4$, then $\mathcal{G}$ has a $\chi$-binding function $\omega(G)$, or otherwise there exists no linear $\chi$-binding function for $\mathcal{G}$.
%\end{observation}

\subsection*{Our Contributions}

Our main result is an almost complete characterization for the near optimal colourability for ($H_1,H_2$)-free graphs. 
Let $\mathcal{X}$ be the set of all graphs listed in \ref{item:1}, \ref{item:2}, \ref{item:3}, and \ref{item:4}. 
Let $\mathcal{X}'$ be the set of all graphs listed in \ref{item:1} and \ref{item:2} except the paw.

\begin{enumerate}[(1)]
\setlength{\itemsep}{0pt}

\item $G = P_4\vee K_n$ for $n\geq 1$.\label{item:1}

\item $G = (K_2+K_1)\vee K_n$ for $n\geq 1$.\label{item:2}

\item $G = K_n-e$ for $n\geq 4$.\label{item:3}

\item $G = K_n$ for $n\geq 3$.\label{item:4}
\end{enumerate}

The following is our main result.

\begin{theorem}\label{thm:dividing}
Let $H_1,H_2$ be graphs that are not induced subgraphs of $P_4$. 
%Let $\mathcal{G}$ be the family of $(H_1,H_2)$-free graphs. 
Then the family of $(H_1,H_2)$-free graphs is near optimal colourable only if $H_1$ is a forest and $H_2\in \mathcal{X}$. 
Moreover if $H_2\in \mathcal{X}'$, then the family of $(H_1,H_2)$-free graphs is near optimal colourable if and only if $H_1 = 2K_2$. 
($H_1$ and $H_2$ are interchangeable.)

%Let $H_1,H_2$ be two graphs and $\mathcal{G}$ be the family of $(H_1,H_2)$-free graphs. Then,

%(1) If $H_1$ or $H_2$ is an induced subgraph of $P_4$, then $\mathcal{G}$ is near optimal colourable.

%(2) If neither $H_1$ nor $H_2$ is a forest, then $\mathcal{G}$ is not near optimal colourable.

%(3) If neither $H_1$ nor $H_2$ is a complement of a linear forest, then $\mathcal{G}$ is not near optimal colourable.

%(4) Let $H_1$ be a forest and $H_2$ be a complement of a linear forest, and neither $H_1$ nor $H_2$ is an induced subgraph of $P_4$. Then,

%\todo[inline]{need indent}
%\begin{adjustwidth}{4em}{0em}
%(4.1) If $H_2$ is not isomorphic to any of $P_4\vee K_n$, $(K_2+K_1)\vee K_n$, $K_n-e$, $K_n$ for any $n$, then $\mathcal{G}$ is not near optimal colourable.
%\end{adjustwidth}

%\begin{adjustwidth}{4em}{0em}
%(4.2) If $H_2=P_4\vee K_n$ or $H_2=(K_2+K_1)\vee K_{n+1}$ for some $n\geq 1$, then $\mathcal{G}$ is near optimal colourable if and only if $H_1=2K_2$.
%\end{adjustwidth}
\end{theorem}

Our theorem leaves only two open cases for the near optimal colourability for $(H_1,H_2)$-free graphs: the Gy{\'a}rf{\'a}s conjecture and \autoref{prob:decidecase2} in \autoref{sec:conc} (where we explain why this is the case). Moreover, \autoref{prob:decidecase2} is also related to Gy{\'a}rf{\'a}s conjecture
and so the open cases in some sense are as hard as Gy{\'a}rf{\'a}s conjecture.
%\todo[inline]{Cross reference Problem, Section} 
As an application of our result, we prove the following \autoref{thm:2k2p4kp_poly}. This solves an open problem by Konrad and Paulusma \cite{DP18}.
 
%There are three cases left to solve:
%
%\case{1} $H_1$ is a forest and is not an induced subgraph of $P_4$. $H_2=K_n$ for some $n\geq 3$.
%
%\case{2} $H_1$ is a forest and is not an induced subgraph of $P_4$. $H_2=K_n-e$ for some $n\geq 4$.
%
%\case{3} $H_1$ is a forest and is not an induced subgraph of $P_4$. $H_2$ is a paw.
%
%In \autoref{sec:conc} we show why are these cases left.

\begin{theorem}\label{thm:2k2p4kp_poly}
For every positive integer $n$, the chromatic number problem of ($2K_2,P_4\vee K_n$)-free graphs is polynomial time solvable.
\end{theorem}
\begin{proof}
By \autoref{thm:dividing}, the family of ($2K_2,P_4\vee K_n$)-free graphs is near optimal colourable. 
Since the $k$-colouring problem of $P_5$-free graphs is polynomial time solvable~\cite{HKLSS10}, it follows from \autoref{thm:reduce_omega} that the chromatic number problem of ($2K_2,P_4\vee K_n$)-free graphs is polynomial time solvable.
\end{proof}

The remainder of the paper is organized as follows. In \autoref{sec:pre} we present some preliminaries. In \autoref{sec:main} we reduce \autoref{thm:dividing} to the case that $H_1=2K_2$, $H_2=P_4\vee K_n$. In \autoref{sec:2k2p4kp} we prove the family of $(2K_2,P_4\vee K_n)$-free graphs is near optimal colourable (\autoref{thm:2k2p4kp}). In \autoref{sec:conc} we make some concluding remarks and list some open problems.

\section{Preliminaries}\label{sec:pre}

Let $G=(V,E)$. We say a vertex $u$ is a \emph{neighbour} (\emph{nonneighbour}) of another vertex $v$ in $G$, if $u$ and $v$ are adjacent (nonadjacent). The set of neighbours of a vertex $v$ in $G$ is denoted by $N_G(v)$, and we often write $N(v)$ if the context is clear. We write $\overline{N}(v)$ to denote the set $V\setminus (N(v)\cup \{v\})$. For a set $S\subseteq V$, we write $N_S(v)$ to denote $N(v)\cap S$, and $\overline{N_S}(v)$ to denote $\overline{N}(v)\cap S$. For $S\subseteq V$, let $G[S]$ denote the subgraph of $G$ induced by $S$, and we often write $S$ for $G[S]$ if the context is clear. We say that $S$ induces a graph $H$ if $G[S]$ is isomorphic to $H$. For two sets $S,T\subseteq V$, we say $S$ and $T$ are \emph{complete}(\emph{anti-complete}), if every vertex in $S$ is adjacent (nonadjacent) to every vertex in $T$. (If $S$ or $T$ has only one vertex $v$, we may write $v$ for $\{v\}$.) A vertex set $S\subseteq V$ is a \emph{stable set} if the vertices in $S$ are pairwise nonadjacent.

We will use the following three known results:

\begin{theorem}[\cite{CRST06}]\label{thm:SPGT}
A graph is perfect if and only if it does not contain an odd hole or an odd antihole.
\end{theorem}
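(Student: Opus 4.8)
The plan is to prove the substantive direction: every graph containing neither an odd hole nor an odd antihole (a \emph{Berge graph}) is perfect. The converse is elementary—an odd hole $C_{2k+1}$ with $k\geq 2$ has $\chi=3>2=\omega$, so it is imperfect; by Lov{\'a}sz's (weak) perfect graph theorem its complement, an odd antihole, is imperfect too; since perfection is hereditary, no perfect graph can contain either. So I would fix a hypothetical \emph{minimal imperfect} Berge graph $G$ (one that is imperfect but all of whose proper induced subgraphs are perfect) and derive a contradiction.

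First I would isolate a short list of ``basic'' Berge graphs that are perfect by classical means: bipartite graphs and their complements, line graphs of bipartite graphs and their complements, and double split graphs. Here K{\"o}nig's theorem handles the bipartite graphs and their line graphs directly, and Lov{\'a}sz's theorem transfers perfection to the complements, so each basic class is disposed of without difficulty.

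The heart of the argument is a structural \emph{decomposition theorem}: every Berge graph is either basic, or admits one of a bounded list of structural cuts—a $2$-join in $G$ or in $\overline{G}$, a proper homogeneous pair, or a balanced skew partition. I would establish this by a long case analysis driven entirely by the forbidden parities: starting from a fixed induced configuration (a longest hole, or a prism/pyramid/theta subgraph), one tracks how each remaining vertex may attach to it, and the absence of odd holes and odd antiholes sharply restricts these attachments, eventually exhibiting one of the cuts. To close the induction I would then invoke the fact that none of these decompositions can appear in a minimal imperfect graph: $2$-joins and homogeneous pairs preserve perfection, so a minimal imperfect graph is indecomposable with respect to them, and a separate theorem guarantees that no minimal imperfect graph has a balanced skew partition. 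Combining the structure theorem with these non-appearance results forces $G$ to be basic, hence perfect, contradicting its imperfection.

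The hardest part, by an enormous margin, is the decomposition theorem itself: deducing the global architecture of an arbitrary Berge graph from purely local parity constraints demands an extraordinarily intricate and lengthy case analysis—the original proof of Chudnovsky, Robertson, Seymour, and Thomas spans well over a hundred pages—and the exclusion of the balanced skew partition from a minimal imperfect graph is a substantial theorem in its own right. A realistic attack would therefore not seek a short route but would organise this case analysis around the canonical ``trigraph'' or configuration framework used in the literature.
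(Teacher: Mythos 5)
Note first that the paper does not prove this statement at all: it is the Strong Perfect Graph Theorem, imported verbatim as a citation to Chudnovsky, Robertson, Seymour, and Thomas, and used later as a black box (e.g.\ to conclude $\chi=\omega$ for the perfect graphs arising in Theorem~\ref{thm:dividing}(1) and Theorem~\ref{thm:c5}). So there is no ``paper proof'' to match your attempt against; the only fair comparison is against the cited proof itself.

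Your outline does faithfully reproduce the architecture of that proof: the easy direction via imperfection of odd holes and antiholes (using Lov\'asz's weak perfect graph theorem and heredity of perfection), the list of basic classes, the decomposition theorem for Berge graphs, and the non-appearance of each decomposition in a minimal imperfect counterexample. But as a proof it has a gap that is essentially the size of the theorem: the decomposition theorem and the exclusion of balanced skew partitions from minimal imperfect graphs are \emph{named}, not proved, and you say so yourself. Describing the strategy of a 150-page case analysis is not the same as carrying it out; nothing in your text could be checked or filled in by a reader without reconstructing the entire CRST argument. Two smaller inaccuracies: the published decomposition theorem does not in the end require the homogeneous-pair cut (it was shown to be removable from the list), and the statement that no minimal imperfect graph has a balanced skew partition is not ``a separate theorem'' one can simply invoke independently --- it is itself established within the CRST framework (building on the Roussel--Rubio lemma), so your induction silently leans on the very machinery it is supposed to build. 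In the context of this paper, the correct move is the one the authors make: cite the theorem rather than attempt to prove it.
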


\begin{theorem}[\cite{Wa80}]\label{thm:2k2bound}
If $G$ is a $2K_2$-free graph, then $\chi(G)\leq\binom{\omega(G)+1}{2}$.
\end{theorem}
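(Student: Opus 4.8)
The plan is to prove the statement by induction on $\omega=\omega(G)$. When $\omega\le 1$ the graph $G$ is edgeless and $\chi(G)=1=\binom{2}{2}$, so assume $\omega\ge 2$ and that the bound holds for every $2K_2$-free graph of smaller clique number. The reduction I would aim for is to find stable sets $S_1,\dots,S_\omega$ in $G$ whose deletion strictly drops the clique number, that is, with $\omega\bigl(G-(S_1\cup\cdots\cup S_\omega)\bigr)\le\omega-1$. Granting this, I would colour $S_1,\dots,S_\omega$ with $\omega$ fresh colours, apply the induction hypothesis to the (still $2K_2$-free) graph $G-(S_1\cup\cdots\cup S_\omega)$ to colour it with at most $\binom{\omega}{2}$ further colours, and obtain
\[
\chi(G)\ \le\ \omega+\binom{\omega}{2}\ =\ \binom{\omega+1}{2}.
\]
Thus the entire difficulty is pushed into producing $\omega$ stable sets whose removal kills every maximum clique.

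The single structural fact I would extract from $2K_2$-freeness is the following: if $a,b$ are nonadjacent vertices, then $N(a)\setminus N(b)$ is complete to $N(b)\setminus N(a)$. Indeed, if $u\in N(a)\setminus N(b)$ and $w\in N(b)\setminus N(a)$ were nonadjacent, then $\{ua,\,wb\}$ would induce a $2K_2$, because $a\not\sim b$, $u\not\sim b$, $w\not\sim a$ and (by assumption) $u\not\sim w$. To exploit this I would fix a \emph{maximal} stable set $A=\{a_1,\dots,a_p\}$; by maximality every vertex outside $A$ has a neighbour in $A$, so I can partition $V(G)\setminus A$ into layers $D_i=\{v\notin A:\ v\sim a_i\text{ and }v\not\sim a_j\text{ for all }j<i\}$ according to the least-indexed $A$-neighbour. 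The structural fact then imposes strong completeness between layers: for $i<j$, any $u\in D_i$ and $w\in D_j$ with $u\not\sim a_j$ must be adjacent, since $u\in N(a_i)\setminus N(a_j)$ and $w\in N(a_j)\setminus N(a_i)$ for the nonadjacent pair $a_i,a_j$.

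The aim is then to read off the required stable sets from this layered, nearly-complete structure: $A$ is itself one stable set, and each $D_i$ lies inside $N(a_i)$ and hence has clique number at most $\omega-1$. The hope is that, because the layers are essentially complete to one another, every maximum clique of $G$ must be concentrated in few layers, so that $A$ together with a carefully chosen transversal of the layers deletes every $\omega$-clique while decomposing into only $\omega$ stable sets in total. I expect the main obstacle to be precisely this bookkeeping: cross-layer completeness fails exactly for the exceptional vertices $u\in D_i$ with $u\sim a_j$ for some $j>i$, and keeping these under control—so that the number of stable sets one must peel off stays at $\omega$ rather than growing with $p$ or with $\alpha(G)$—is the delicate heart of the argument. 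Tracking how clique mass distributes across the layers is also what forces the bound to grow quadratically: the superadditivity of $x\mapsto\binom{x+1}{2}$ is what makes the contributions of the individual layers combine to the stated $\binom{\omega+1}{2}$, and I would treat making this count tight as the technical crux of the proof.
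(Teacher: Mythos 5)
Your argument is a plan, not a proof: the reduction in your first paragraph (find $\omega$ stable sets whose deletion drops the clique number, then induct) is sound, but the entire content of the theorem is concentrated in producing those stable sets, and you never produce them. The layered structure you build from a maximal stable set $A$ does not deliver them: the number of layers $D_i$ is $|A|$, which is unrelated to $\omega$; the layers themselves are not stable (each $D_i$ only satisfies $\omega(D_i)\leq\omega-1$, being contained in $N(a_i)$); and the cross-layer completeness you derive fails exactly at the vertices $u\in D_i$ with $u\sim a_j$ for some $j>i$. Without that completeness, the clique numbers of distinct layers need not add up to at most $\omega$, so the superadditivity of $x\mapsto\binom{x+1}{2}$ --- which you correctly identify as the mechanism that would make the colour budget close --- cannot be invoked. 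You acknowledge all of this yourself (``the delicate heart of the argument'', ``the technical crux''), which is precisely an admission that the main step is missing. As it stands the proposal does not establish the bound. (Note also that the paper does not prove this statement; it quotes it from Wagon's 1980 paper, so there is no internal proof to compare against.)

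The gap closes if you anchor on a maximum clique instead of a maximal stable set. Let $K=\{v_1,\dots,v_\omega\}$ be a maximum clique. For $i<j$, let $S_{ij}$ be the set of vertices nonadjacent to both $v_i$ and $v_j$: an edge inside $S_{ij}$ together with the edge $v_iv_j$ would induce a $2K_2$, so each $S_{ij}$ is stable. For each $i$, let $T_i$ be the set of vertices adjacent to every vertex of $K$ except $v_i$: an edge inside $T_i$ would extend $K\setminus\{v_i\}$ to a clique of size $\omega+1$, so each $T_i$ is stable, and $T_i\cup\{v_i\}$ is still stable since every vertex of $T_i$ is nonadjacent to $v_i$. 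Because $K$ is maximum, no vertex is adjacent to all of $K$, so every vertex lies in some $T_i\cup\{v_i\}$ or some $S_{ij}$; these $\omega+\binom{\omega}{2}=\binom{\omega+1}{2}$ stable sets cover $V(G)$, giving the bound directly with no induction at all. (Alternatively, one can check that deleting the $\omega$ stable sets $T_i\cup\{v_i\}$ destroys every $\omega$-clique, which is exactly the reduction your first paragraph asks for; but the direct cover makes even that unnecessary.)
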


\begin{theorem}[\cite{BRSV19}]\label{thm:2k2gem}
If $G$ is a ($2K_2$,gem)-free graph, then $\chi(G)\leq \max\{3, \omega(G)\}$.
\end{theorem}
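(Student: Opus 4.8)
The plan is to separate the problem into a perfect case and an imperfect case, the latter being forced by an induced $C_5$, and to exploit the fact that $2K_2$-freeness and gem-freeness each eliminate almost all odd holes and odd antiholes.

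First I would record which obstructions to perfection can survive in the class. In a $2K_2$-free graph every induced cycle of length at least $6$ is forbidden, since $C_k$ for $k\geq 6$ already contains two "opposite" non-adjacent edges inducing a $2K_2$; hence the only odd hole a $2K_2$-free graph may contain is $C_5$. Dually, the only odd antihole a gem-free graph may contain is $\overline{C_5}=C_5$: in $\overline{C_{2k+1}}$ with $k\geq 3$ the neighbourhood of any vertex induces the complement of a path on at least four vertices, which contains an induced $P_4$, so $\overline{C_{2k+1}}$ contains a gem. Combining these two observations with \autoref{thm:SPGT}, a $(2K_2,\text{gem})$-free graph $G$ with no induced $C_5$ has neither an odd hole nor an odd antihole, hence is perfect and satisfies $\chi(G)=\omega(G)$. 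Separately, when $\omega(G)\leq 2$, \autoref{thm:2k2bound} already gives $\chi(G)\leq\binom{3}{2}=3$. So it remains to treat graphs $G$ containing an induced $C_5$ with $\omega(G)\geq 3$, where $\max\{3,\omega(G)\}=\omega(G)$, and the target becomes $\chi(G)\leq\omega(G)$.

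For this main case I would fix an induced $C_5$, say $Q=v_1v_2v_3v_4v_5$, and classify every other vertex $u$ by its trace $N(u)\cap Q$. Testing $2K_2$-freeness against the five edges of $Q$ (no edge at $u$ may be anticomplete to an edge of $Q$) leaves as the only possible traces the empty set, the five non-edges $\{v_{i-1},v_{i+1}\}$, the five covering triples $\{v_i,v_{i+2},v_{i-2}\}$, and the traces of size $4$ and $5$. Gem-freeness then kills the last two: a vertex complete to four consecutive vertices of $Q$, which induce a $P_4$, is the apex of a gem. This yields a bounded list of vertex classes, namely the copies $\hat{A}_i$ of $v_i$ (trace a non-edge), the classes $B_i$ (trace a covering triple), and the class $D$ of vertices with empty trace; I would also note, again by $2K_2$-freeness, that a $D$-vertex can only have neighbours whose trace covers every edge of $Q$, that is, neighbours in $\bigcup_i B_i$.

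The heart of the argument, and the step I expect to be hardest, is to convert this classification into a colouring with $\omega(G)$ colours. The key structural leverage is that two vertices whose traces are anticomplete to a common edge of $Q$ cannot be adjacent, which forces any clique to meet each class $\hat{A}_i$ in at most one vertex and thereby concentrates every large clique inside $\bigcup_i B_i$. I would analyse the adjacencies within and between the $\hat{A}_i$, the $B_i$, and $D$, using both $2K_2$- and gem-freeness, aiming to show that $G$ splits so that a perfect (indeed $C_5$-free) part carries essentially all of the clique number while the residual $C_5$-like part never needs more than the three colours already on hand; equivalently, to cover $V(G)$ by $\omega(G)$ stable sets obtained by merging an optimal colouring of the perfect part with a $3$-colouring adapted to $Q$. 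The main obstacle is controlling the interaction between a large clique sitting in the $B$-classes and the fivefold cyclic structure of the $\hat{A}_i$ and $D$, and checking that no colour beyond $\omega(G)$ is ever forced; I would attack this by induction on $|V(G)|$ or on $\omega(G)$, peeling off a homogeneous piece or a suitable clique so that the remaining graph stays $(2K_2,\text{gem})$-free with a strictly smaller parameter.
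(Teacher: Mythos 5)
Note first that the paper itself does not prove this statement: it is imported verbatim from reference [BRSV19] and used as the base case ($n=1$) of the induction that proves \autoref{thm:c5}. So your argument has to stand entirely on its own. Its preliminary reductions are correct and cleanly justified: $2K_2$-freeness excludes every hole on six or more vertices, gem-freeness excludes every odd antihole on seven or more vertices (the neighbourhood of a vertex of $\overline{C_{2k+1}}$ induces $\overline{P_{2k-2}}$, which contains an induced $P_4$ once $k\geq 3$), so by \autoref{thm:SPGT} any $C_5$-free graph in the class is perfect; and \autoref{thm:2k2bound} disposes of $\omega(G)\leq 2$. Your classification of the possible traces on a fixed induced $C_5$ (empty set, the five non-edges, the five covering triples, with sizes four and five killed by gem-freeness) is also correct, and matches the partition the paper uses for the general theorem.

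The problem is that the theorem \emph{is} the remaining case --- $G$ contains a $C_5$ and $\omega(G)\geq 3$, show $\chi(G)\leq\omega(G)$ --- and there your text stops being a proof. Phrases such as ``aiming to show that $G$ splits so that a perfect part carries essentially all of the clique number,'' ``checking that no colour beyond $\omega(G)$ is ever forced,'' and ``I would attack this by induction on $|V(G)|$ or on $\omega(G)$, peeling off a homogeneous piece or a suitable clique'' are statements of intent, not arguments: no decomposition is exhibited, no stable-set cover is constructed, no induction hypothesis is formulated, and no case analysis is carried out. That missing step is the entire substance of the result of Brause, Randerath, Schiermeyer and Vumar. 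For a sense of scale, the analogue of exactly this step for general $n$ occupies essentially all of \autoref{sec:2k2p4kp} of the paper: a long sequence of claims controlling how the classes $B_i$ interact with the $S_i$ and $Z$, a case split on which $B_i$ have large clique number, and separate colouring constructions in each case. Since your setup gives no indication of how an $\omega(G)$-colouring would actually be produced in the presence of a $C_5$, the proposal has a genuine gap at its core.
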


\section{The Main  Result}\label{sec:main}

In this section we prove \autoref{thm:dividing}. We start with two lemmas.

%We use the following \autoref{thm:2k2p4kp} to prove \autoref{thm:dividing}. We prove \autoref{thm:2k2p4kp} in \autoref{sec:2k2p4kp}.

\begin{lemma}\label{lem:2k23k1}
Let $H$ be a forest which is not an induced subgraph of $P_4$. Then either $H$ contains a $3K_1$ or $H$ is a $2K_2$, and $\overline{H}$ is not a forest.
\end{lemma}
\begin{proof}
Assume that $H$ is $3K_1$-free, then $H$ is a linear forest and has at most two components. 
Since $P_5$ contains a $3K_1$ and $H$ is not an induced subgraph of $P_4$, $H$ has two components.
Then each component of $H$ has at most two vertices. This implies that $H$ is a $2K_2$ since $H$ is not an induced subgraph of $P_4$. So $H$ contains a $3K_1$ or is a $2K_2$, which implies that $\overline{H}$ contains a $C_3$ or is a $C_4$.
\end{proof}

\begin{lemma}\label{lem:omegageq}
Let $\mathcal{G}$ be a $\chi$-bounded graph family. Then $\mathcal{G}$ is near optimal colourable if and only if there is a constant number $g$ such that every graph $G\in \mathcal{G}$ with $\omega(G)\geq g$ has $\chi(G)=\omega(G)$.
\end{lemma}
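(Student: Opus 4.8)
The statement is an equivalence, so I would prove the two implications separately; the only fact used throughout is the universal inequality $\omega(G) \le \chi(G)$.

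For the forward direction, assume $\mathcal{G}$ is near optimal colourable with witnessing constant $c$, and take $g := c$. If $G \in \mathcal{G}$ satisfies $\omega(G) \ge g = c$, then $\max\{c,\omega(G)\} = \omega(G)$, so the near optimal colourability bound gives $\chi(G) \le \omega(G)$; together with $\chi(G) \ge \omega(G)$ this forces $\chi(G) = \omega(G)$. Notably, this direction does not use the $\chi$-boundedness hypothesis at all.

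For the reverse direction, assume the constant $g$ from the statement and use $\chi$-boundedness to absorb the graphs of small clique number into a single constant. Let $f$ be a $\chi$-binding function for $\mathcal{G}$ and set $c := \max\{1, f(1), \ldots, f(g-1)\}$, which is a well-defined constant since the index range is finite. For any $G \in \mathcal{G}$: if $\omega(G) \ge g$, then by hypothesis $\chi(G) = \omega(G) \le \max\{c,\omega(G)\}$; if $\omega(G) < g$, then $\chi(G) \le f(\omega(G)) \le c \le \max\{c,\omega(G)\}$ (the empty graph being trivial). In either case $\chi(G) \le \max\{c,\omega(G)\}$, so $\mathcal{G}$ is near optimal colourable.

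The essentially only content is recognising that $\chi$-boundedness is precisely what licenses the reverse direction: it bounds the finitely many classes $\{G \in \mathcal{G} : \omega(G) = k\}$ for $k < g$ by a single number, whereas without it such a family could have $\chi(G) = \omega(G)$ for all large $\omega(G)$ yet exhibit arbitrarily large chromatic number among graphs of bounded clique number. The forward direction is immediate from $\omega \le \chi$ and the choice $g = c$, so I expect no obstacle there.
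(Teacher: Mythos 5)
Your proof is correct and follows essentially the same route as the paper: the forward direction via $\omega \le \chi$ with $g := c$, and the reverse direction by splitting on $\omega(G) \ge g$ versus $\omega(G) < g$ and using the $\chi$-binding function to bound the small-clique case. The only (minor) difference is that you take $c := \max\{1, f(1), \ldots, f(g-1)\}$ where the paper simply uses $f(g-1)$, which makes your version slightly more careful since it does not implicitly assume $f$ is nondecreasing.
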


\begin{proof}

The necessity is obvious. 
Suppose that every graph $G\in \mathcal{G}$ with $\omega(G)\geq g$ has $\chi(G)=\omega(G)$. Since $\mathcal{G}$ is $\chi$-bounded, we may assume that every graph $G\in \mathcal{G}$ with $\omega(G) < g$ has $\chi(G)\leq c$. So every graph $G\in \mathcal{G}$ has $\chi(G)\leq \max\{c,\omega(G)\}$.
%Let $f$ be a $\chi$-binding function of $\mathcal{G}$.
%Suppose that every graph $G\in \mathcal{G}$ with $\omega(G)\geq g$ has $\chi(G)=\omega(G)$. Let $H\in \mathcal{G}$. If $\omega(H)\geq g$, then $\chi(H)=\omega(H)$. If $\omega(H)\leq g-1$, then $\chi(H)\leq f(g-1)$. So $\chi(H)\leq\max\{f(g-1), \omega(H)\}$.
\end{proof}

Now we define two families of graphs (see \autoref{fig:xnyn}). For every positive integer $n$, let $X_n=C_5\vee K_n$, then $\chi(X_n)=n+3$ and $\omega(X_n)=n+2$. The graph $Y_n$ is obtained from a $C_5$ by blowing up two nonadjacent vertices to $K_n$, then $\chi(Y_n)=n+2$ and $\omega(Y_n)=n+1$. (To \emph{blow up} a vertex $v$ in $G$ to a graph $H$, is to remove $v$ and add a graph $H$ and then make $H$ complete to $N_G(v)$ and anti-complete to $\overline{N_G}(v)$.)  Note that every $X_n$ is ($2K_2,3K_1,C_4$)-free, and every $Y_n$ is (gem, HVN, $3K_1, C_4$)-free. (The constructions of $X_n$ and $Y_n$ can also be found in \cite{BHPT93} and \cite{Xu23}.)

\begin{figure}[h!]
\centering
\begin{tikzpicture}[scale=0.5]
\tikzstyle{vertex}=[circle, draw, fill=white, inner sep=1pt, minimum size=5pt]
	\node at (0,-2.5) {$X_n$};
    \node[vertex](1) at (0,2) {};
    \node[vertex](2) at (-{2*cos(18)},{2*sin(18}) {};
    \node[vertex](3) at (-{2*sin(36)},-{2*cos(36}) {};
    \node[vertex](4) at ({2*sin(36)},-{2*cos(36}) {};
    \node[vertex](5) at ({2*cos(18)},{2*sin(18}) {};
    \node[vertex](6) at (0,0) {$K_n$};

	\foreach \from/\to in {1/2,2/3,3/4,4/5,1/5,1/6,2/6,3/6,4/6,5/6}
		\draw (\from) -- (\to);

\end{tikzpicture}
\hspace{10mm}
\begin{tikzpicture}[scale=0.5]
\tikzstyle{vertex}=[circle, draw, fill=white, inner sep=1pt, minimum size=5pt]
	\node at (0,-2.5) {$Y_n$};
    \node[vertex](1) at (0,2) {};
    \node[vertex](2) at (-{2*cos(18)},{2*sin(18}) {$K_n$};
    \node[vertex](3) at (-{2*sin(36)},-{2*cos(36}) {};
    \node[vertex](4) at ({2*sin(36)},-{2*cos(36}) {};
    \node[vertex](5) at ({2*cos(18)},{2*sin(18}) {$K_n$};

	\foreach \from/\to in {1/2,2/3,3/4,4/5,1/5}
		\draw (\from) -- (\to);
\end{tikzpicture}
\caption{}
\label{fig:xnyn}
\end{figure}

To prove \autoref{thm:dividing}, we need to handle a particular graph families, namely ($2K_2,P_4\vee K_n$)-free graphs.

\begin{theorem}\label{thm:2k2p4kp}
For every positive integer $n$, the family of ($2K_2,P_4\vee K_n$)-free graphs is near optimal colourable.
\end{theorem}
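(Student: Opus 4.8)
The plan is to induct on $n$. The base case $n=1$ is exactly \autoref{thm:2k2gem}, since $P_4\vee K_1$ is the gem, and it supplies the constant $c_1=3$. For the inductive step I assume that every $(2K_2,P_4\vee K_{n-1})$-free graph $H$ satisfies $\chi(H)\le\max\{c_{n-1},\omega(H)\}$ and prove the analogous bound for a $(2K_2,P_4\vee K_n)$-free graph $G$ with a new constant $c_n$. The engine driving the induction is the following local observation: for every vertex $v$, the graph $G[N(v)]$ is $(2K_2,P_4\vee K_{n-1})$-free, because $v$ is complete to $N(v)$, so an induced $P_4\vee K_{n-1}$ inside $N(v)$ would, together with $v$, yield an induced $P_4\vee K_n$ in $G$. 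Hence $\chi(G[N(v)])\le\max\{c_{n-1},\omega(G)-1\}$ for every $v$. I may also assume $G$ is connected, since a $2K_2$-free graph has at most one component containing an edge and the remaining vertices are isolated, each taking an already-used colour.

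Next I split on the size of $\omega(G)$, fixing $c_n$ to be a sufficiently large constant to be determined. If $\omega(G)\le c_n$, then \autoref{thm:2k2bound} gives $\chi(G)\le\binom{\omega(G)+1}{2}\le\binom{c_n+1}{2}$, so $\chi(G)\le\max\{\binom{c_n+1}{2},\omega(G)\}$ and we are done after enlarging the final constant. The substantial case is therefore $\omega(G)>c_n$, where near optimal colourability demands the much stronger conclusion $\chi(G)=\omega(G)$. Note that $G$ need not be perfect in this regime, as it may still contain induced copies of $C_5$; so I really must produce an \emph{exact} $\omega$-colouring, up to the additive slack that the constant allows.

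To handle large $\omega$ I would analyse $G$ around a maximum clique $Q$ with $|Q|=\omega(G)$. For $u\notin Q$ let $M(u)=Q\setminus N(u)$ be its set of non-neighbours in $Q$; maximality of $Q$ gives $M(u)\neq\emptyset$. Being $2K_2$-free, $G$ satisfies $|M(u)\cap M(w)|\le1$ whenever $uw$ is an edge with $u,w\notin Q$, since two common non-neighbours in $Q$ would, together with $u$ and $w$, induce a $2K_2$. Maximality also shows that two vertices whose only non-neighbour in $Q$ is the same vertex $q$ cannot be adjacent, for otherwise $(Q\setminus\{q\})\cup\{u,w\}$ would be a clique of size $\omega+1$. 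I would then colour $Q$ with $1,\dots,\omega$ and try to assign each $u\notin Q$ a colour taken from the indices of $M(u)$, which automatically avoids every conflict with $Q$; the residual conflicts lie only among $V\setminus Q$ and are to be controlled by the stability just described, by the neighbourhood bound $\chi(G[N(v)])\le\max\{c_{n-1},\omega-1\}$ governing the dense regions, and by $P_4\vee K_n$-freeness bounding the exceptional configurations. Here \autoref{thm:SPGT} serves to pin down the only imperfections compatible with $2K_2$-freeness, namely induced $C_5$'s and bounded antiholes, which are exactly what the additive constant $c_n$ is there to absorb.

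The main obstacle is precisely this last combination step: converting the local data—well-coloured neighbourhoods, small overlaps of the deficiency sets $M(u)$, and stability of equally-constrained vertices—into a single global colouring using at most $\max\{c_n,\omega\}$ colours. The difficulty is that a vertex with $|M(u)|=1$ has no freedom in its colour, so one must show that the subgraph such forced vertices induce, together with the vertices driven onto the same colour class, is so restricted by $P_4\vee K_n$-freeness and the inductive neighbourhood bound that all conflicts can be rerouted inside the $\omega$ available colours once $\omega$ exceeds the threshold $c_n$, with the finitely many genuinely non-$\omega$-colourable cores (all of bounded clique number) contributing only to the constant. Arranging the bookkeeping of $c_n$ so that the induction closes, rather than letting the slack grow uncontrollably with $n$, is where the real work lies.
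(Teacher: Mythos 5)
Your setup is sound---induction on $n$ with \autoref{thm:2k2gem} as the base case, the observation that $G[N(v)]$ is $(2K_2,P_4\vee K_{n-1})$-free, and the reduction via \autoref{thm:2k2bound} to the regime $\omega(G)>c_n$ where one must prove the exact equality $\chi(G)=\omega(G)$---and all of these ingredients do appear in the paper. But the proposal stops exactly where the proof has to start. Your plan for the hard regime is to colour a maximum clique $Q$ with $1,\dots,\omega$ and assign each outside vertex $u$ a colour drawn from its non-neighbourhood $M(u)\subseteq Q$, resolving conflicts among the outside vertices by the small-overlap and stability facts you record. You then state, correctly, that converting these local constraints into a global $\omega$-colouring is ``where the real work lies''---but that work is never done, and there is no indication of how forced vertices (those with $|M(u)|=1$), their mutual adjacencies, and the induction constant $c_n$ would actually be reconciled. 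As written, this is a research plan with an honest admission of the gap, not a proof.

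It is also a different (and undeveloped) route from the one the paper takes, which may explain why the combination step resists: the paper does not organize the argument around a maximum clique at all. It first proves (\autoref{thm:big_antihole}) that any $(2K_2,P_4\vee K_n)$-free graph containing an antihole on $6$ or more vertices has $\omega(G)$ bounded by a function of $n$ (an antihole has at most $2n+4$ vertices, and each of the $2^r$ attachment classes has clique number at most $n$), so such graphs are absorbed into the constant and can be ignored. For the remaining graphs, if there is no induced $C_5$ then $G$ is perfect by \autoref{thm:SPGT} and we are done; otherwise the entire analysis is anchored on an induced $C_5$: the vertex set is partitioned into classes $A_i,B_i,D_i,F,Z$ according to attachment to the $C_5$, and roughly a dozen claims establish completeness/stability relations among these classes, leading to a case split on which $B_i$ have large clique number. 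In one case $G$ decomposes into $C_5$-free (hence perfect) pieces that are pairwise complete; in the other, the induction hypothesis for $n-1$ is applied to pieces lying inside $N(v_1)$ (this is where your neighbourhood observation is actually used), after extracting a large subclique of $B_1$ complete to the rest. None of this structure, nor any substitute for it, is present in your proposal, so the central claim---$\chi(G)=\omega(G)$ for large $\omega$---remains unproved.
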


We shall prove \autoref{thm:2k2p4kp} in \autoref{sec:2k2p4kp}. Now we use \autoref{thm:2k2p4kp} to prove \autoref{thm:dividing}.

\begin{proof}[Proof of \autoref{thm:dividing}]
%~\\

%Note that \autoref{thm:dividing}(1) follows from \autoref{thm:SPGT}, and \autoref{thm:dividing}(2)follows from the result of Erd\H{o}s \cite{Er59}.
Let  $\mathcal{G}$ denote the family of $(H_1,H_2)$-free graphs. Assume that $\mathcal{G}$ is near optimal colourable. By the result of Erd\H{o}s \cite{Er59}, if neither $H_1$ nor $H_2$ is a forest, then $\mathcal{G}$ is not $\chi$-bounded and so not near optimal colourable. By symmetry let $H_1$ be a forest.

Assume that $H_2$ is not a complement of a linear forest. By \autoref{lem:2k23k1}, $H_1$ is not a complement of a linear forest, either. By \autoref{lem:omegageq}, there exists a constant number $g$ such that every graph $G\in\mathcal{G}$ with $\omega(G)\geq g$ has $\chi(G)=\omega(G)$. Since neither $H_1$ nor $H_2$ is a complement of a linear forest, each of $\overline{H_1}$ and $\overline{H_2}$ contains a cycle or a claw. Assume that the longest induced cycle in $\overline{H_1}$ and $\overline{H_2}$ has $m$ vertices, if it exists. If neither $\overline{H_1}$ nor $\overline{H_2}$ contains a cycle, then let $m$ be $0$. We consider an odd antihole $L=\overline{C_{2n+1}}$ with $n\geq\max\{g, \frac{m}{2}\}$. Then $\chi(L)=n+1$ and $\omega(L)=n$. Since $\overline L=C_{2n+1}$ does not contain a cycle of size $m$ or less (if $m>0$) or a claw, $\overline L$ is ($\overline{H_1}, \overline{H_2}$)-free. So $L\in\mathcal{G}$, which contradicts our assumption that every graph $G\in\mathcal{G}$ with $\omega(G)\geq g$ has $\chi(G)=\omega(G)$. So $H_2$ is a complement of a linear forest.

By \autoref{lem:2k23k1}, $H_1$ contains either a $3K_1$ or a $2K_2$. If $H_2$ contains a $C_4$, then $\mathcal{G}$ contains $X_n$ for arbitrarily large $n$, which contradicts that $\mathcal{G}$ is near optimal colourable. So $H_2$ must be $C_4$-free, and then $\overline{H_2}$ is a $2K_2$-free linear forest. So $\overline{H_2}$ is an induced subgraph of $P_4+nK_1$ for some $n$. It is easy to check that $\mathcal{X}$ are all graphs satisfying the conditions for $H_2$. Now we assume that $H_2$ is $C_4$-free and contains a gem or an HVN, that is, $H_2\in \mathcal{X}'$. If $H_1$ contains a $3K_1$, then $\mathcal{G}$ contains $Y_n$ for arbitrarily large $n$, which contradicts that $\mathcal{G}$ is near optimal colourable. By \autoref{lem:2k23k1}, $H_1$ can only be $2K_2$. This proves the necessity. The sufficiency follows from \autoref{thm:2k2p4kp} (note that every ($2K_2,(K_2+K_1)\vee K_n$)-free graph is ($2K_2,P_4\vee K_n$)-free).
%Assume that $\mathcal{G}$ is near optimal colourable. First we assume that $H_2$ contains a $C_4$, then $H_1$ is ($2K_2,3K_1$)-free, or else $\mathcal{G}$ contains $X_n$ for arbitrarily large $n$. By \autoref{lem:2k23k1}, there is no such an $H_1$ that meets the conditions. So $H_2$ must be $C_4$-free, and then $\overline{H_2}$ is a $2K_2$-free linear forest. So $\overline{H_2}$ is an induced subgraph of $P_4+nK_1$ for some $n$. It is easy to check that $\mathcal{X}$ are all graphs satisfying the conditions for $H_2$. Now we assume that $H_2$ is $C_4$-free and contains a gem or an HVN, that is, $H_2\in \mathcal{X}'$. Then $H_1$ must be $3K_1$-free, or else $\mathcal{G}$ contains $Y_n$ for arbitrarily large $n$. By \autoref{lem:2k23k1}, $H_1$ can only be $2K_2$. This proves the necessity. The sufficiency follows from \autoref{thm:2k2p4kp}. Note that the family of ($2K_2,(K_2+K_1)\vee K_n$)-free graphs is a subset of the family of ($2K_2,P_4\vee K_n$)-free graphs.
% This proves (4.1). Then we assume that $H_2$ is $C_4$-free and contains a gem or an HVN, that is, $H_2=P_4\vee K_n$ or $H_2=(K_2+K_1)\vee K_{n+1}$ for some $n\geq 1$. Then $H_1$ must be $3K_1$-free, or else $\mathcal{G}$ contains $Y_n$ for arbitrarily large $n$. By \autoref{lem:2k23k1}, $H_1$ can only be $2K_2$. This proves the necessity of (4.2). The sufficiency of (4.2) follows from \autoref{thm:2k2p4kp}. (Note that the family of ($2K_2,(K_2+K_1)\vee K_n$)-free graphs is a subset of the family of ($2K_2,P_4\vee K_n$)-free graphs.)
\end{proof}

\section{($2K_2,P_4\vee K_n$)-free Graphs}\label{sec:2k2p4kp}

In this section we prove \autoref{thm:2k2p4kp}. Brause, Randerath, Schiermeyer, and Vumar~\cite{BRSV19} proved that the family of ($2K_2, P_4\vee K_1$)-free graphs is near optimal colourable (\autoref{thm:2k2gem}). Our \autoref{thm:2k2p4kp} is a generalization of their result.

We start with a simple proposition.

%Since the family of $2K_2$-free graphs is $\chi$-bounded by \autoref{thm:2k2bound}, we use \autoref{lem:omegageq} and divide \autoref{thm:2k2p4kp} to two subproblems, as the following:

\begin{lemma}\label{thm:big_antihole}
For every positive integer $n$, there exists a constant number $c$ such that every ($2K_2, P_4\vee K_n$)-free graph $G$ which contains an antihole on 6 or more vertices has $\chi(G)\leq c$.
\end{lemma}

\begin{proof}
Let $G=(V,E)$ be ($2K_2, P_4\vee K_n$)-free. Let $Q=\{v_1,v_2,\ldots,v_r\}(r\geq6)$ be an antihole in $G$ with $v_iv_{i+1}\notin E$ for $i=1,2,\ldots,r$, with all indices modulo $r$. Since $\overline{C_{2n+5}}$ contains a $P_4\vee K_n$, we have $r\leq 2n+4$. Then $\omega(Q)=\lfloor\frac{r}{2}\rfloor\leq n+2$.

For each subset $S$ of $Q$, we denote $N_S=\{u\in V\setminus Q: N_Q(u)=S\}$. Then every vertex in $V\setminus Q$ belongs to exactly one of these $2^r$ sets. For any subset $S$ of $Q$ we consider the following two cases:

{\bf Case 1.} $Q\setminus S$ contains a $K_2$. Then $N_S$ is a stable set since $G$ is $2K_2$-free.

{\bf Case 2.} $Q\setminus S$ is a stable set. Then there exist an $i$ such that $Q\setminus S\subseteq\{v_i,v_{i+1}\}$, and $\{v_{i+2},v_{i+3},v_{i+4},v_{i+5}\}\subseteq S$ induces a $P_4$. So $\omega(N_S)\leq n-1$ since $G$ is $P_4\vee K_n$-free.

So for every $S$, $\omega(N_S)\leq\max\{1,n-1\}\leq n$, then $\omega(G)\leq 2^r\cdot n+\omega(Q)\leq 2^{2n+4}\cdot n+n+2$ and $\chi(G)\leq\binom{2^{2n+4}\cdot n+n+3}{2}$ by \autoref{thm:2k2bound}.
\end{proof}

To prove \autoref{thm:2k2p4kp}, it remains to prove the following theorem on $(2K_2, P_4\vee K_n)$-free graphs with no antihole of length 6 or more. 

\begin{theorem}\label{thm:c5}
For every positive integer $n$, the family of ($2K_2, P_4\vee K_n$)-free graph $G$ which contains no antiholes on 6 or more vertices is near optimal colourable.
%For every positive integer $n$, there exists a constant number $g$ such that every ($2K_2, P_4\vee K_n$)-free graph $G$ which contains no antiholes on 6 or more vertices with $\omega(G)\geq g$ has $\chi(G)=\omega(G)$.
\end{theorem}

%Then \autoref{thm:2k2p4kp} follows from \autoref{thm:big_antihole} and \autoref{thm:c5}.

\begin{proof}
By \autoref{thm:2k2bound} and \autoref{lem:omegageq}, it suffices to prove that for every positive integer $n$, there exists a constant number $g$ such that every ($2K_2, P_4\vee K_n$)-free graph $G$ which contains no antiholes on 6 or more vertices with $\omega(G)\geq g$ has $\chi(G)=\omega(G)$. We prove by induction on $n$. By \autoref{thm:2k2gem}, this theorem is true for $n=1$. Now we assume that $n\geq 2$, and every ($2K_2, P_4\vee K_{n-1}$)-free graph $G$ which contains no antiholes on 6 or more vertices with $\omega(G)\geq g_{n-1}$ has $\chi(G)=\omega(G)$.

Let $G=(V,E)$ be a ($2K_2, P_4\vee K_n$)-free graph which contains no antiholes on 6 or more vertices. We assume that $\omega(G)\geq\max\{g_{n-1}, 4n+2\}+10n$, and we prove that $\chi(G)=\omega(G)$ in the following.

We say two nonadjacent vertices $u,v$ are \emph{comparable} if $N(u)\subseteq N(v)$ or $N(v)\subseteq N(u)$. If $u_1,u_2\in V$ are comparable with $N(u_1)\subseteq N(u_2)$, then $\chi(G-u_1)=\chi(G)$ and $\omega(G-u_1)=\omega(G)$. So we may assume that $G$ has no pairs of comparable vertices.

If $G$ is $C_5$-free, then $G$ is perfect by \autoref{thm:SPGT}. So in the following we assume that $Q=\{v_1,v_2,v_3,v_4,v_5\}$ induces a $C_5$ in $G$ with $v_iv_{i+1}\in E$ for $i=1,2,\ldots,5$, with all indices modulo 5. We define some vertex sets:

$A_i=\{u\in V\setminus Q: N_Q(u)=\{v_{i-1},v_{i+1}\}\}$.

$B_i=\{u\in V\setminus Q: N_Q(u)=\{v_i,v_{i-2},v_{i+2}\}\}$.

$D_i=\{u\in V\setminus Q: N_Q(u)=Q\setminus\{v_i\}\}$.

$F=\{u\in V\setminus Q: N_Q(u)=Q\}$.

$Z=\{u\in V\setminus Q: N_Q(u)=\emptyset\}$.

$A=\bigcup_{i=1}^5A_i$, $B=\bigcup_{i=1}^5B_i$, $D=\bigcup_{i=1}^5D_i$.

$S_i=A_i\cup\{v_i\}$, $S=\bigcup_{i=1}^5S_i$.

We prove some properties of these sets.

\begin{claim}\label{clm:whole}
$V=Q\cup A\cup B\cup D\cup F\cup Z$.
\end{claim}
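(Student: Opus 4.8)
The statement to prove, \autoref{clm:whole}, asserts that the vertex sets $Q, A, B, D, F, Z$ partition $V$. The plan is to show that the neighbourhood-type sets indexed by subsets $S \subseteq Q$ account for every possible induced relationship a vertex $u \in V \setminus Q$ can have with the $C_5$ on $Q = \{v_1,\dots,v_5\}$. Concretely, each $u \in V \setminus Q$ has a well-defined neighbourhood $N_Q(u) \subseteq Q$, so $V \setminus Q$ is partitioned into at most $2^5 = 32$ classes according to $N_Q(u)$. The sets $A_i, B_i, D_i, F, Z$ are defined by specific subsets of $Q$, so it suffices to argue that every realisable value of $N_Q(u)$ is exactly one of these named subsets; the remaining subsets must be shown to be empty under the $2K_2$-freeness hypothesis.

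First I would classify by $|N_Q(u)|$. If $|N_Q(u)| = 0$ then $u \in Z$, and if $N_Q(u) = Q$ then $u \in F$; these are immediate from the definitions. The work lies in the intermediate cases, and the key tool is that $G$ is $2K_2$-free together with the cyclic structure of the $C_5$. The crucial observation is that for any vertex $u$, the nonneighbours of $u$ among $\{v_1,\dots,v_5\}$ cannot contain an edge of the $C_5$: if $v_j, v_{j+1}$ were both nonadjacent to $u$, then since $u$ must be adjacent to at least one $v_k$ (else $u \in Z$) and the $C_5$ has a matching of size two among $\{v_{j+2},v_{j+3},v_{j+4}\}\setminus\{v_k\}$-type configurations, one can exhibit a $2K_2$. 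More carefully, for each candidate nonneighbour set $Q \setminus N_Q(u)$ that itself contains a $K_2$, the edge $v_j v_{j+1}$ together with a disjoint edge $u v_k$ (for a neighbour $v_k$ of $u$ not incident to $v_j, v_{j+1}$) forms a $2K_2$, a contradiction.

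Thus I would enumerate: when $|N_Q(u)| = 1$, the four nonneighbours in the $C_5$ contain an edge, giving a $2K_2$ — so no such vertex exists. When $|N_Q(u)| = 2$, the neighbours must be a nonadjacent pair $\{v_{i-1}, v_{i+1}\}$ (an adjacent pair would leave three consecutive nonneighbours, again forcing a $2K_2$), so $u \in A_i$. When $|N_Q(u)| = 3$, the neighbours must induce a path $\{v_{i-2}, v_i, v_{i+2}\}$ (the "spread" triple), since any three vertices containing a $C_5$-edge leave two consecutive nonneighbours and a disjoint neighbour-edge; this gives $u \in B_i$. When $|N_Q(u)| = 4$, the single nonneighbour is some $v_i$, giving $u \in D_i$. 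Since $A = \bigcup A_i$, $B = \bigcup B_i$, $D = \bigcup D_i$, every vertex of $V \setminus Q$ lands in exactly one named set, which establishes $V = Q \cup A \cup B \cup D \cup F \cup Z$.

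The main obstacle is the careful case analysis for $|N_Q(u)| \in \{1, 2, 3\}$, where I must rule out the "wrong" subsets by explicitly producing a $2K_2$ from an edge of the $C_5$ lying in the nonneighbourhood and a disjoint edge from $u$ to one of its neighbours on $Q$. Each exclusion hinges on verifying that the neighbour $v_k$ of $u$ can be chosen disjoint from and nonadjacent to the exhibited $C_5$-edge $v_j v_{j+1}$; the cyclic symmetry of the $C_5$ makes all these verifications routine once one is set up, so I would prove a single lemma — \emph{$Q \setminus N_Q(u)$ is a stable set in the $C_5$} — and then read off the admissible subsets as precisely those whose complements are independent sets of the $C_5$, which are exactly $\emptyset$, the five nonadjacent pairs $\{v_{i-1},v_{i+1}\}$, the five spread triples, the five four-element sets, and $Q$ itself.
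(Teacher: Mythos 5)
Your final list of admissible neighbourhoods is the right one, and your size-by-size enumeration would, if correctly justified, reach the same classification as the paper's proof (which instead fixes a neighbour $v_1$ of $u$ and forces further adjacencies by exhibiting explicit $2K_2$'s such as $\{u,v_1,v_3,v_4\}$). However, the single lemma you propose to organize the whole argument around --- ``$Q\setminus N_Q(u)$ is a stable set of the $C_5$'' --- is false, and the step where you would prove it genuinely fails. Take a vertex $u$ with $N_Q(u)=\{v_{i-1},v_{i+1}\}$, i.e.\ $u\in A_i$. Its nonneighbour set is $\{v_i,v_{i+2},v_{i-2}\}$, which is \emph{not} stable: $v_{i+2}$ and $v_{i-2}$ are consecutive on the cycle (indices mod $5$), so they form a $C_5$-edge. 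Yet no $2K_2$ arises, because the only vertex of the $C_5$ nonadjacent to both $v_{i+2}$ and $v_{i-2}$ is $v_i$, which is not a neighbour of $u$; the ``disjoint neighbour-edge $uv_k$'' your argument requires simply does not exist. Indeed, $C_5$ plus such a vertex $u$ is easily checked to be $2K_2$-free, and these vertices are exactly the sets $A_i$, which are central to the rest of the paper's proof --- so any lemma implying $A=\emptyset$ cannot be proved.

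The same confusion surfaces twice more. Your ``read off'' list is internally inconsistent with your lemma: the complement of the nonadjacent pair $\{v_{i-1},v_{i+1}\}$ is the spread triple, which contains an edge, so under your criterion the class $A_i$ would be excluded even though you include it in the list. And the spread triple $\{v_{i-2},v_i,v_{i+2}\}$ does not induce a path or an independent set; it induces $K_2+K_1$. Consequently your justification in the $|N_Q(u)|=3$ case (``any three vertices containing a $C_5$-edge leave two consecutive nonneighbours'') is also wrong: the spread triple contains a $C_5$-edge but leaves a nonadjacent pair of nonneighbours. The correct local principle, which is what the paper's case analysis implements, is this: a pair of nonneighbours $v_j,v_{j+1}$ of $u$ yields a $2K_2$ precisely when $u$ is adjacent to $v_{j+3}$, the unique $C_5$-vertex anticomplete to $\{v_j,v_{j+1}\}$. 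Replacing your stability lemma with this statement (or reverting to the paper's direct analysis) repairs the proof and yields exactly the admissible neighbourhoods you listed: $\emptyset$, the nonadjacent pairs, the spread triples, the four-element sets, and $Q$.
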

\begin{proof}
Let $u\in V\setminus Q$. If $u$ has no neighbours in $Q$, then $u\in Z$. By symmetry assume that $uv_1\in E$. Since $\{u,v_1,v_3,v_4\}$ cannot induce a $2K_2$, $u$ must be adjacent to at least one of $v_3$ and $v_4$. By symmetry let $uv_3\in E$. If $uv_2\notin E$, then depending on $uv_4$ and $uv_5$, there are four cases that $u$ belongs to $A_2, B_1, B_3$ or $D_2$. If $uv_2\in E$, then sinse $\{u,v_2,v_4,v_5\}$ cannot induce a $2K_2$, $u$ must be adjacent to at least one of $v_4$ and $v_5$. Then there are three cases that $u$ belongs to $D_4,D_5$ or $F$.
\end{proof}

\begin{claim}\label{clm:astable}
Each $S_i\cup Z$ is a stable set. As a corollary, $Z$ and $S$ are anti-complete.
\end{claim}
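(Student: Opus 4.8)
The plan is to reduce everything to the $2K_2$-freeness of $G$, exploiting one structural feature of the $C_5$ on $Q=\{v_1,\dots,v_5\}$: the two vertices $v_{i-2}$ and $v_{i+2}$ are adjacent (their indices differ by $4\equiv -1\pmod 5$), so $v_{i-2}v_{i+2}$ is an edge of $Q$, while every vertex of $S_i$ has $Q$-neighbourhood exactly $\{v_{i-1},v_{i+1}\}$ and is therefore non-adjacent to both $v_{i-2}$ and $v_{i+2}$. This hands us, for the set $S_i$, a ready-made edge of $Q$ whose two endpoints are missed by all of $S_i$ — precisely the ingredient needed to manufacture a forbidden $2K_2$ out of any putative edge inside $S_i$.

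First I would show that each $S_i$ is stable. Suppose $s_1,s_2\in S_i$ are adjacent. If one of them equals $v_i$, then the other lies in $A_i$ and has $v_i\notin N_Q$, so they cannot in fact be adjacent; hence both lie in $A_i$. Now $s_1,s_2,v_{i-2},v_{i+2}$ are four distinct vertices, $s_1s_2$ and $v_{i-2}v_{i+2}$ are edges, and neither $s_1$ nor $s_2$ is adjacent to $v_{i-2}$ or $v_{i+2}$; thus $\{s_1,s_2,v_{i-2},v_{i+2}\}$ induces a $2K_2$, a contradiction. Next, $Z$ is stable: an edge $z_1z_2$ inside $Z$, together with any edge of the $C_5$ such as $v_1v_2$, forms a $2K_2$, since $z_1,z_2$ have no neighbour in $Q$. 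For the anti-completeness of $S_i$ and $Z$, suppose $s\in S_i$ and $z\in Z$ were adjacent; then $sz$ together with $v_{i-2}v_{i+2}$ again induces a $2K_2$, because $s$ misses $v_{i-2},v_{i+2}$ as above and $z$ misses all of $Q$ (note $s\ne v_i$ here, as $v_i$ has no neighbour in $Z$). Combining the three facts shows $S_i\cup Z$ is stable, and the stated corollary is just the union of the anti-completeness assertions over $i=1,\dots,5$.

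There is essentially no hard step in this claim; every case is settled by the same one-line $2K_2$ argument, and the difficulty is purely a matter of bookkeeping. The two points that demand a moment's care are (i) verifying that the auxiliary edge chosen is vertex-disjoint from the pair under consideration — this holds because $v_{i-2},v_{i+2}$ lie in $Q$ and both differ from $v_i$, while the pair lies in $(V\setminus Q)\cup\{v_i\}$ — and (ii) separating out the degenerate sub-case in which one endpoint is the $C_5$-vertex $v_i$ itself, where no edge can arise. I expect the whole statement to fall out immediately once the adjacency $v_{i-2}v_{i+2}\in E$ and the common $Q$-neighbourhood $\{v_{i-1},v_{i+1}\}$ shared by all of $S_i$ are recorded.
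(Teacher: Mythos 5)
Your proof is correct and uses exactly the same idea as the paper: every vertex of $S_i\cup Z$ is non-adjacent to both endpoints of the edge $v_{i-2}v_{i+2}$, so any edge inside $S_i\cup Z$ would create a $2K_2$. The paper simply applies this observation uniformly to an arbitrary adjacent pair $u_1,u_2\in S_i\cup Z$ in one line; your case split (within $S_i$, within $Z$, between them) is harmless but unnecessary, since the argument never needs to know which part of $S_i\cup Z$ the two vertices come from.
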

\begin{proof}
If $u_1,u_2\in S_i\cup Z$ are adjacent, then $\{u_1,u_2,v_{i-2},v_{i+2}\}$ induces a $2K_2$.
\end{proof}

\begin{claim}\label{clm:dfomega}
Each $D_i\cup F$ has clique number at most $n-1$.
\end{claim}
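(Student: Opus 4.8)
The plan is to exploit the $P_4\vee K_n$-freeness directly, using the fact that the vertices of $D_i\cup F$ all attach completely to a $P_4$ sitting inside $Q$. First I would record the structural observation that makes this work: by definition, every vertex of $F$ is adjacent to all of $Q$, and every vertex of $D_i$ is adjacent to $Q\setminus\{v_i\}$; hence every vertex of $D_i\cup F$ is complete to the four-vertex set $Q\setminus\{v_i\}$. Next I would observe that deleting one vertex from the $C_5$ induced by $Q$ leaves an induced $P_4$: concretely, $Q\setminus\{v_i\}=\{v_{i+1},v_{i+2},v_{i+3},v_{i+4}\}$ induces the path $v_{i+1}v_{i+2}v_{i+3}v_{i+4}$, with no chords since $G[Q]$ is a chordless $C_5$.

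With these two facts in hand, the argument is immediate. Let $K$ be any clique contained in $D_i\cup F$. Since $K$ is a clique and $K$ is complete to the induced $P_4$ on $Q\setminus\{v_i\}$, the induced subgraph $G[K\cup(Q\setminus\{v_i\})]$ is exactly a $P_4\vee K_{|K|}$: the four $Q$-vertices induce $P_4$, the vertices of $K$ induce $K_{|K|}$, and all cross edges are present. If $|K|\ge n$, then this subgraph contains $P_4\vee K_n$ as an induced subgraph (restrict to any $n$ vertices of $K$), contradicting that $G$ is $P_4\vee K_n$-free. Therefore $|K|\le n-1$, and since $K$ was an arbitrary clique, $\omega(D_i\cup F)\le n-1$, as claimed.

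I do not expect a genuine obstacle here; the content of the claim is really just a clean packaging of the forbidden-subgraph hypothesis. The only point that warrants care, and the one I would double-check when writing it out, is that the subgraph on $K\cup(Q\setminus\{v_i\})$ is \emph{exactly} $P_4\vee K_{|K|}$ as an induced subgraph: this requires verifying that $Q\setminus\{v_i\}$ carries no extra edges (guaranteed by $G[Q]$ being an induced $C_5$) and that the join is complete (guaranteed by the definitions of $D_i$ and $F$), so that no stray chord or missing cross-edge spoils the isomorphism. Once that is confirmed, the bound $\omega(D_i\cup F)\le n-1$ follows for each $i$ simultaneously by the same reasoning.
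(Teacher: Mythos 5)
Your proof is correct and follows exactly the same approach as the paper: a clique $K$ of size $n$ in $D_i\cup F$ together with the induced $P_4$ on $Q\setminus\{v_i\}=\{v_{i+1},v_{i+2},v_{i-2},v_{i-1}\}$ would induce a $P_4\vee K_n$, contradicting $P_4\vee K_n$-freeness. The paper states this in one line; your write-up just makes the verification of the join structure explicit.
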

\begin{proof}
If $K\subseteq D_i\cup F$ is a clique on $n$ vertices, then $K\cup\{v_{i+1},v_{i+2},v_{i-2},v_{i-1}\}$ induces a $P_4\vee K_n$.
\end{proof}

\begin{claim}\label{clm:bb+1}
$B_i$ and $B_{i-1}\cup B_{i+1}\cup A_i$ are complete.
\end{claim}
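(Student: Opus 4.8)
The plan is to reduce the claim to three completeness facts — that $B_i$ is complete to $A_i$, to $B_{i+1}$, and to $B_{i-1}$ — and to dispose of each by a single application of $2K_2$-freeness. Since the relation ``$B_i$ complete to $B_{i+1}$'' is asserted for every index, applying it with $i$ replaced by $i-1$ already yields ``$B_{i-1}$ complete to $B_i$''; hence the third fact is a mere reindexing of the second, and only two cases need genuine work.

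The uniform device will be the following. Every vertex in $A$, $B$, $D$, $F$ has a prescribed neighbourhood in $Q$ by definition, and any two non-adjacent vertices of the $C_5$ lie at distance two, i.e.\ $v_j v_{j+2}\notin E$. So to show that some $x\in B_i$ is adjacent to a target vertex $y$, I would suppose $xy\notin E$ and then exhibit an induced $2K_2$ of the shape $\{x, v_a, y, v_b\}$, where $v_a\in N_Q(x)$ and $v_b\in N_Q(y)$ are chosen to be a non-adjacent (distance-two) pair of the $C_5$, and where the two remaining cross pairs $xv_b$ and $yv_a$ are non-edges forced by the neighbourhood constraints. This contradicts $2K_2$-freeness.

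Recording the neighbourhoods $N_Q(b)=\{v_i, v_{i-2}, v_{i+2}\}$ for $b\in B_i$, $N_Q(a)=\{v_{i-1}, v_{i+1}\}$ for $a\in A_i$, and $N_Q(c)=\{v_{i+1}, v_{i-1}, v_{i-2}\}$ for $c\in B_{i+1}$ (using $v_{i+3}=v_{i-2}$ modulo $5$), one checks that the single pair $(v_{i+2}, v_{i-1})$ works in both cases: these two $C_5$-vertices are non-adjacent, $b$ sees $v_{i+2}$ while $a$ (resp.\ $c$) sees $v_{i-1}$, and the crossings $bv_{i-1}$ and $av_{i+2}$ (resp.\ $cv_{i+2}$) are non-edges. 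Thus $\{b, v_{i+2}, a, v_{i-1}\}$ (resp.\ $\{b, v_{i+2}, c, v_{i-1}\}$) induces a $2K_2$ whenever $b$ fails to be adjacent to $a$ (resp.\ $c$), giving the required contradiction.

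There is no deep obstacle here; the only thing demanding care is the index bookkeeping modulo $5$, making sure that each chosen pair of $C_5$-vertices really is non-adjacent and that both leftover cross pairs really are non-edges for the vertex classes involved. I would guard against slips by writing out $N_Q$ of each class explicitly, as above, and then verifying all four cross pairs of each candidate $2K_2$ one at a time.
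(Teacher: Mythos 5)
Your proof is correct and is essentially identical to the paper's: the paper also disposes of $B_{i+1}\cup A_i$ with the single induced $2K_2$ on $\{u_1,v_{i+2},u_2,v_{i-1}\}$, leaving the $B_{i-1}$ case to the same reindexing symmetry you invoke. Your explicit verification of the neighbourhood bookkeeping modulo $5$ is exactly what the paper's one-line proof leaves implicit.
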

\begin{proof}
If $u_1\in B_i$ and $u_2\in B_{i+1}\cup A_i$ are nonadjacent, then $\{u_1,v_{i+2},u_2,v_{i-1}\}$ induces a $2K_2$.
\end{proof}

\begin{claim}\label{clm:nonneighbourhoodstable}
Let $u\in A_{i+2}\cup A_{i-2}\cup B_{i+2}\cup B_{i-2}\cup D\cup F$, then $\overline{N_{B_i}}(u)$ is a stable set.
\end{claim}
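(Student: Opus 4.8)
The plan is to prove the statement by contradiction, producing a forbidden induced $2K_2$ whenever the conclusion fails. So I would suppose that $\overline{N_{B_i}}(u)$ is \emph{not} a stable set, i.e.\ there are two vertices $w_1,w_2\in B_i$ with $w_1w_2\in E$ and $uw_1,uw_2\notin E$. The goal is then to find a single vertex $x\in Q$ such that $\{u,x\}$ is an edge that is anti-complete to $\{w_1,w_2\}$; this makes $\{u,x,w_1,w_2\}$ induce a $2K_2$ (note $x\in Q$ while $u,w_1,w_2\in V\setminus Q$, so the four vertices are distinct), contradicting that $G$ is $2K_2$-free.

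The key observation driving everything is that, by the definition of $B_i$, every vertex of $B_i$ has $Q$-neighbourhood exactly $\{v_i,v_{i-2},v_{i+2}\}$, and hence is \emph{non}-adjacent to both $v_{i-1}$ and $v_{i+1}$. Consequently, as soon as $u$ is adjacent to one of $v_{i-1},v_{i+1}$, taking $x$ to be that vertex does the job: $\{u,x\}$ is an edge, $\{w_1,w_2\}$ is an edge, $x$ is non-adjacent to $w_1,w_2$ because they lie in $B_i$, and $u$ is non-adjacent to $w_1,w_2$ by the choice of $w_1,w_2$. Thus the entire argument reduces to the single uniform claim: \emph{every} $u$ in the listed sets is adjacent to $v_{i-1}$ or to $v_{i+1}$.

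I would then verify this last point case by case directly from the defining $Q$-neighbourhoods, with all indices read modulo $5$. A vertex of $A_{i+2}$ has $Q$-neighbourhood $\{v_{i+1},v_{i+3}\}$, so it is adjacent to $v_{i+1}$; a vertex of $A_{i-2}$ has $\{v_{i-3},v_{i-1}\}$, so it is adjacent to $v_{i-1}$; a vertex of $B_{i+2}$ has $\{v_{i+2},v_i,v_{i+4}\}=\{v_{i+2},v_i,v_{i-1}\}$, so it is adjacent to $v_{i-1}$; a vertex of $B_{i-2}$ has $\{v_{i-2},v_{i-4},v_i\}=\{v_{i-2},v_{i+1},v_i\}$, so it is adjacent to $v_{i+1}$; a vertex of $D_j$ is adjacent to all of $Q$ except $v_j$, and since $j$ cannot equal both $i-1$ and $i+1$, it is adjacent to at least one of $v_{i-1},v_{i+1}$; and a vertex of $F$ is adjacent to all of $Q$. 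In each case we obtain the desired $x\in\{v_{i-1},v_{i+1}\}$.

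I do not expect a genuine obstacle here: the only content is the modular index bookkeeping that confirms membership of $v_{i\pm1}$ in each vertex's $Q$-neighbourhood, and the verification is purely mechanical. It is worth noting that this argument uses only $2K_2$-freeness and the definition of $B_i$ (together with the fact, consistent with \autoref{clm:bb+1}, that the excluded sets $A_i,A_{i\pm1},B_i,B_{i\pm1}$ are precisely the ones complete to $B_i$ or lacking a neighbour among $v_{i-1},v_{i+1}$); the hypothesis that $G$ is $P_4\vee K_n$-free is not needed for this claim.
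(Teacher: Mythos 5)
Your proof is correct and is essentially the same as the paper's: the paper also observes that every such $u$ is adjacent to $v_{i+1}$ or $v_{i-1}$, and then derives the induced $2K_2$ on $\{u, v_{i\pm1}, u_1, u_2\}$ from an adjacent pair $u_1,u_2 \in \overline{N_{B_i}}(u)$, using that vertices of $B_i$ are nonadjacent to $v_{i\pm1}$. The only difference is that you spell out the modular-index case analysis that the paper leaves implicit.
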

\begin{proof}
It is clear that $u$ is adjacent to $v_{i+1}$ or $v_{i-1}$. By symmetry let $uv_{i+1}\in E$. Assume that $u_1,u_2\in \overline{N_{B_i}}(u)$ are adjacent, then $\{u,v_{i+1},u_1,u_2\}$ induces a $2K_2$.
\end{proof}

\begin{claim}\label{clm:2b2dempty}
If $\omega(B_i)\geq n+1$, then $B_{i+2}\cup B_{i-2}\cup D_{i+2}\cup D_{i-2}=\emptyset$.
\end{claim}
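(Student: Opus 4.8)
The plan is to derive a contradiction with $P_4\vee K_n$-freeness whenever $\omega(B_i)\geq n+1$ and one of the four sets is nonempty. By the reflection of the $C_5$ that fixes $v_i$ and swaps $v_{i+1}\leftrightarrow v_{i-1}$ (hence $v_{i+2}\leftrightarrow v_{i-2}$, $B_{i+2}\leftrightarrow B_{i-2}$, and $D_{i+2}\leftrightarrow D_{i-2}$), it suffices to rule out $B_{i+2}\neq\emptyset$ and $D_{i+2}\neq\emptyset$; the other two cases are exact mirror images and need no separate argument.

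The engine of the proof is \autoref{clm:nonneighbourhoodstable}. Fix a clique $K\subseteq B_i$ with $|K|\geq n+1$. Every vertex $u\in B_{i+2}\cup B_{i-2}\cup D$ satisfies that $\overline{N_{B_i}}(u)$ is stable; since $K\subseteq B_i$ is a clique, $\overline{N_K}(u)\subseteq\overline{N_{B_i}}(u)$ is simultaneously a clique and a stable set, so $|\overline{N_K}(u)|\leq 1$. Hence such a $u$ is complete to a subclique $K'\subseteq K$ with $|K'|\geq|K|-1\geq n$. This is the key point: membership in the ``far'' sets forces an \emph{almost}-complete attachment to any clique inside $B_i$, and once a large clique $K$ is present, ``almost-complete'' upgrades to ``complete to a $K_n$''.

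It then remains to exhibit, for $u\in B_{i+2}$ and for $u\in D_{i+2}$, an induced $P_4$ every vertex of which is complete to $K'$; adjoining $K'$ (trimmed to size $n$) produces a forbidden $P_4\vee K_n$. The natural candidate is $\{v_i,v_{i+2},v_{i-2},u\}$. The three $C_5$-vertices $v_i,v_{i+2},v_{i-2}$ are exactly the neighbours of $B_i$ in $Q$, hence complete to $K'$, and $u$ is complete to $K'$ by the previous paragraph. In the $C_5$ one has $v_iv_{i+2},v_iv_{i-2}\notin E$ while $v_{i+2}v_{i-2}\in E$, so these three vertices induce an edge $v_{i+2}v_{i-2}$ together with an isolated vertex $v_i$. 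A short check of $u$'s attachments to $Q$ (from the definitions of $B_{i+2}$ and $D_{i+2}$) shows that $u$ is adjacent to $v_i$ and to exactly one endpoint of that edge, which extends the configuration to an induced $P_4$ in both cases: $v_i\,u\,v_{i+2}\,v_{i-2}$ when $u\in B_{i+2}$, and $v_i\,u\,v_{i-2}\,v_{i+2}$ when $u\in D_{i+2}$.

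I expect the main obstacle to be precisely this last step: one must verify that $\{v_i,v_{i+2},v_{i-2},u\}$ induces a genuine $P_4$ rather than a paw or a $C_4$, which requires carefully tracking $u$'s adjacencies to all five $C_5$-vertices in each of the two cases. The conceptual content, by contrast, is entirely carried by \autoref{clm:nonneighbourhoodstable} and the clique-meets-stable-set observation; the $P_4$ construction is then forced, and the two cases differ only in which endpoint of the edge $v_{i+2}v_{i-2}$ the vertex $u$ grabs.
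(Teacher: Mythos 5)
Your proof is correct and matches the paper's argument essentially verbatim: both extract a clique of size $n$ inside $N_{B_i}(u)$ via \autoref{clm:nonneighbourhoodstable} (a nonneighbourhood that is stable meets a clique in at most one vertex) and then join it to the induced $P_4$ on $\{v_i,u,v_{i+2},v_{i-2}\}$ to contradict $P_4\vee K_n$-freeness. The only cosmetic difference is which pair you treat explicitly before invoking the reflection symmetry (you handle $B_{i+2}\cup D_{i+2}$, the paper handles $B_{i+2}\cup D_{i-2}$), and this changes nothing of substance.
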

\begin{proof}
By symmetry suppose that $u\in B_{i+2}\cup D_{i-2}$. By \autoref{clm:nonneighbourhoodstable}, there is a clique $K$ on $n$ vertices in $N_{B_i}(u)$, then $\{v_i,u,v_{i+2},v_{i-2}\}\cup K$ induces a $P_4\vee K_n$.
\end{proof}

\begin{claim}\label{clm:neighbourhoodclique}
Let $u\in A_{i+1}\cup A_{i-1}$, then $\omega(N_{B_i}(u))\leq n-1$.
\end{claim}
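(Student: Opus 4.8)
The plan is to argue by contradiction in the same spirit as \autoref{clm:2b2dempty}: assume $\omega(N_{B_i}(u))\geq n$, take a clique $K$ of size $n$ inside $N_{B_i}(u)$, and exhibit a forbidden $P_4\vee K_n$. The natural candidate for the $P_4$ is the four vertices $\{v_i,v_{i+2},v_{i-2},u\}$, with $K$ playing the role of $K_n$. Since $A_{i+1}$ and $A_{i-1}$ are interchanged by the reflection of the $C_5$ fixing $v_i$, I would treat only the case $u\in A_{i+1}$ and invoke symmetry for $u\in A_{i-1}$.

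The first step is to check that $K$ is complete to $\{v_i,v_{i+2},v_{i-2},u\}$. This is immediate from the definitions: every vertex of $K\subseteq B_i$ satisfies $N_Q(\cdot)=\{v_i,v_{i-2},v_{i+2}\}$, hence is adjacent to all three cycle vertices, and $K\subseteq N_{B_i}(u)$ forces adjacency to $u$ as well. The second step is to verify that $\{v_i,v_{i+2},v_{i-2},u\}$ induces exactly a $P_4$. Reading off the adjacencies for $u\in A_{i+1}$ (so $N_Q(u)=\{v_i,v_{i+2}\}$): the only edges among these four are $v_i u$, $u v_{i+2}$, and $v_{i+2}v_{i-2}$ (the last because $v_{i-2}$ and $v_{i+2}$ are consecutive on the $C_5$), while $v_i v_{i+2}$, $v_i v_{i-2}$, and $u v_{i-2}$ are non-edges. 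This is precisely the path $v_i - u - v_{i+2} - v_{i-2}$. Combining the two steps, $\{v_i,u,v_{i+2},v_{i-2}\}\cup K$ induces a $P_4\vee K_n$, contradicting the hypothesis that $G$ is $P_4\vee K_n$-free, and therefore $\omega(N_{B_i}(u))\leq n-1$.

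I do not expect any serious obstacle here: the argument is a direct case of the $P_4\vee K_n$ extraction already used in \autoref{clm:2b2dempty}, and the only care needed is the bookkeeping of indices modulo $5$ to confirm that the four chosen vertices induce a path rather than, say, a paw or $C_4$. The single point worth double-checking is the edge $v_{i+2}v_{i-2}$, which relies on $v_{i-2}=v_{i+3}$ being consecutive to $v_{i+2}$ on the cycle; this is what makes the induced subgraph a genuine $P_4$ and not a disconnected or triangle-containing graph.
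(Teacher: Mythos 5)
Your proof is correct and matches the paper's argument exactly: the paper's one-line proof takes a clique $K$ of size $n$ in $N_{B_i}(u)$ and observes that $\{v_i,u,v_{i+2},v_{i-2}\}\cup K$ induces a $P_4\vee K_n$, which is precisely the configuration you construct and verify. Your additional bookkeeping (checking the edge $v_{i+2}v_{i-2}$ and the three non-edges, plus completeness of $K$ to the path) is just the detail the paper leaves implicit.
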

\begin{proof}
If $K\subseteq N_{B_i}(u)$ is a clique on $n$ vertices, then $\{v_i,u,v_{i+2},v_{i-2}\}\cup K$ induces a $P_4\vee K_n$.
\end{proof}

If every $B_i$ has $\omega(B_i)\leq n$, then $\omega(G)\leq\omega(S)+\omega(B)+\omega(D)+\omega(F)+\omega(Z)\leq 5+5n+5(n-1)+(n-1)+1=11n$, which contradicts our assumption that $\omega(G)\geq 14n+2$. By symmetry we may assume that $\omega(B_1)\geq n+1$. By \autoref{clm:2b2dempty}, $B_3\cup B_4\cup D_3\cup D_4$ is empty. Moreover, at most one of $B_2$ and $B_5$ has clique number at least $n+1$. By symmetry we assume that $\omega(B_5)\leq n$. Then we discuss two cases based on whether $\omega(B_2)\geq n+1$.

\case{1} $\omega(B_2)\geq n+1$.

By \autoref{clm:2b2dempty}, $B=B_1\cup B_2$ and $D=D_1\cup D_2$. So $B\cup D\cup F\subseteq N(v_4)$. By \autoref{clm:astable}, for every $u\in Z$, $N(u)\subseteq N(v_4)$. So $Z=\emptyset$ since $G$ has no pairs of comparable vertices.

\begin{claim}\label{clm:case1_bd}
$D_2\cup F$ and $B_2$ are complete. (resp. $D_1\cup F$ and $B_1$ are complete.)
\end{claim}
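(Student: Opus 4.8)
The plan is to prove the statement by contradiction, producing a forbidden $P_4\vee K_n$ whenever some vertex of $D_2\cup F$ fails to be complete to $B_2$. First I would dispose of the parenthetical statement by symmetry: the reflection of the $C_5$ that fixes $v_4$ and swaps $v_1\leftrightarrow v_2$ and $v_3\leftrightarrow v_5$ interchanges $B_1$ with $B_2$ and $D_1$ with $D_2$ while preserving $F$ and all the hypotheses of Case~1, so it suffices to show that $D_2\cup F$ is complete to $B_2$. Thus suppose, for contradiction, that $u\in D_2\cup F$ and $b\in B_2$ are nonadjacent.

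The key idea is to build the clique part of the $P_4\vee K_n$ from the \emph{other} large clique, the one in $B_1$, rather than from $B_2$. Since we are in Case~1 we have $\omega(B_1)\geq n+1$, so $B_1$ contains a clique $L_0$ of size $n+1$. Because $u\in D\cup F$, \autoref{clm:nonneighbourhoodstable} applied with $i=1$ tells us that $\overline{N_{B_1}}(u)$ is a stable set; as $L_0$ is a clique, $u$ is nonadjacent to at most one vertex of $L_0$, and hence there is a clique $L\subseteq N_{B_1}(u)$ of size $n$. I would then take the path $v_1-u-v_4-b$ as the $P_4$. Both $v_1$ and $v_4$ are adjacent to $u$ (this holds whether $u\in D_2$ or $u\in F$, since each such vertex is adjacent to $v_1$ and $v_4$), while $b$ is adjacent to $v_4$ but not to $v_1$, and $v_1\not\sim v_4$ together with $u\not\sim b$; so $\{v_1,u,v_4,b\}$ induces a $P_4$. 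Finally, $v_1$ and $v_4$ are adjacent to every vertex of $B_1$ by the definition of $B_1$, $u$ is adjacent to all of $L$ by construction, and $b$ is adjacent to all of $B_1$ by \autoref{clm:bb+1}; hence $\{v_1,u,v_4,b\}$ is complete to $L$, so $\{v_1,u,v_4,b\}\cup L$ induces a $P_4\vee K_n$, the desired contradiction.

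The step I expect to be the main obstacle is choosing the right $P_4$ and the right clique. The naive attempt keeps the $K_n$ inside $B_2$ and uses the path $u-v_4-b-v_2$; this fails for two reasons that the route above is designed to avoid. For $u\in F$ it is not even an induced $P_4$, because $F$ is complete to $Q$ and so $u\sim v_2$ creates a chord. And for $u\in D_2$ it forces me to find a size-$n$ clique of $B_2$ lying in $N(u)\cap N(b)$, which is awkward because \autoref{clm:nonneighbourhoodstable} does not apply to the vertex $b\in B_2$ itself, so $\overline{N_{B_2}}(b)$ need not be stable and $b$ may avoid every large clique of $B_2$. Routing the argument through the clique in $B_1$ and using $v_1$ (a neighbour of $u$ that is nonadjacent to both $v_4$ and $b$) as the pendant of the path sidesteps both difficulties and treats $D_2$ and $F$ uniformly; the only ingredient drawn from Case~1 is $\omega(B_1)\geq n+1$.
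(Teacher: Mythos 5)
Your proof is correct and follows essentially the same route as the paper's: both take nonadjacent $u\in D_2\cup F$ and $b\in B_2$, extract a clique $K\subseteq N_{B_1}(u)$ of size $n$ via \autoref{clm:nonneighbourhoodstable} (using $\omega(B_1)\geq n+1$, which you spell out more explicitly than the paper does), invoke \autoref{clm:bb+1} to make $b$ complete to $K$, and exhibit $\{v_1,u,v_4,b\}\cup K$ as an induced $P_4\vee K_n$. The only cosmetic difference is your explicit symmetry argument for the parenthetical statement, which the paper leaves implicit.
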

\begin{proof}
Suppose that $u_1\in D_2\cup F$ and $u_2\in B_2$ are nonadjacent. By \autoref{clm:nonneighbourhoodstable}, there is a clique $K\subseteq N_{B_1}(u_1)$ on $n$ vertices. By \autoref{clm:bb+1}, $u_2$ and $K$ are complete, then $\{v_1,u_1,v_4,u_2\}\cup K$ induces a $P_4\vee K_n$.
\end{proof}

\begin{claim}\label{clm:case1_dd}
$D_1,D_2,F$ are pairwise complete.
\end{claim}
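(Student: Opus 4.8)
The plan is to rule out, for each of the three pairs among $D_1$, $D_2$, $F$, the existence of nonadjacent vertices lying in two different sets, always by exhibiting a forbidden $P_4\vee K_n$. The common engine is that each of $B_1$ and $B_2$ carries a clique of size at least $n+1$ (this is exactly the standing Case~1 hypothesis $\omega(B_1),\omega(B_2)\ge n+1$), that by \autoref{clm:case1_bd} the set $B_1$ is complete to $D_1\cup F$ while $B_2$ is complete to $D_2\cup F$, and that $B_1$ is complete to $\{v_1,v_3,v_4\}$ and $B_2$ is complete to $\{v_2,v_4,v_5\}$ straight from the definitions of $B_i$. So to force a $P_4\vee K_n$ it suffices to find an induced $P_4$ on two vertices of $Q$ together with the two nonadjacent vertices, all four of which are complete to a common clique of size $n$ inside $B_1$ or $B_2$.

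First I would dispose of the two pairs involving $F$. If $u_1\in F$ and $u_2\in D_2$ were nonadjacent, then $u_2-v_4-u_1-v_2$ is an induced $P_4$ (using that $u_2$ misses only $v_2$ within $Q$ while $u_1$ sees all of $Q$); since $v_2,v_4$ and both $u_1,u_2$ are complete to $B_2$, any $n$-clique of $B_2$ yields a $P_4\vee K_n$, a contradiction. The pair $F$, $D_1$ is symmetric, using the induced path $u_2-v_4-u_1-v_1$ for $u_2\in D_1$ and an $n$-clique of $B_1$.

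The main obstacle is the pair $D_1$, $D_2$, because here no single $B_j$ is a priori complete to both vertices. Supposing $u_1\in D_1$ and $u_2\in D_2$ are nonadjacent, the natural induced $P_4$ is $u_1-v_3-u_2-v_1$ (each $u_i$ misses only $v_i$ within $Q$), which forces the attached clique to be complete to $v_1$ and $v_3$ and hence to live in $B_1$. Now $u_1\in D_1$ is complete to all of $B_1$ by \autoref{clm:case1_bd}, but $u_2\in D_2$ need not be; to repair this I would invoke \autoref{clm:nonneighbourhoodstable} with $i=1$ (legitimate since $u_2\in D$), which makes $\overline{N_{B_1}}(u_2)$ a stable set. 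Consequently the size-$(n+1)$ clique guaranteed in $B_1$ meets $N_{B_1}(u_2)$ in a clique of size at least $n$, and this subclique is complete to $v_1,v_3,u_1,u_2$ simultaneously, producing the forbidden $P_4\vee K_n$ and completing the claim.
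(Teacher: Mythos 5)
Your proof is correct, but it follows a genuinely different construction from the paper's. The paper disposes of all three pairs in one stroke: by the reflection symmetry of the $C_5$ swapping $v_1$ and $v_2$, it suffices to take $u_1\in D_1$ and $u_2\in D_2\cup F$ nonadjacent, and the paper then routes the induced $P_4$ through an \emph{auxiliary} vertex $u_3\in B_2$ adjacent to both $u_1$ and $u_2$, taking the path $v_1$--$u_2$--$u_3$--$u_1$ and attaching an $n$-clique $K\subseteq N_{B_1}(u_2)$; this requires \autoref{clm:bb+1} (to make $u_3$ complete to $K$) in addition to \autoref{clm:nonneighbourhoodstable} and \autoref{clm:case1_bd}. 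You instead treat the three pairs separately and build each $P_4$ entirely inside $Q\cup\{u_1,u_2\}$, which eliminates the auxiliary vertex and the appeal to \autoref{clm:bb+1}: for the two pairs involving $F$, all four path vertices are complete to the relevant $B_j$ directly from \autoref{clm:case1_bd} and the definition of $B_j$, so any $n$-clique of $B_j$ finishes; for $D_1$ versus $D_2$ you use \autoref{clm:nonneighbourhoodstable} once to shrink an $(n+1)$-clique of $B_1$ to an $n$-clique inside $N_{B_1}(u_2)$, with completeness to $u_1$ again coming from \autoref{clm:case1_bd}. Everything you invoke is available at this point (Case 1 supplies $\omega(B_1),\omega(B_2)\geq n+1$), and all adjacencies and non-adjacencies in your three paths check out, so there is no gap. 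What the two approaches buy: yours is more elementary and each verification is immediate, at the cost of a short case analysis; the paper's symmetric, unified argument is more compact on the page but leans on one extra structural fact and on the existence of a common $B_2$-neighbour.
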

\begin{proof}
By symmetry suppose that $u_1\in D_1$ and $u_2\in D_2\cup F$ are nonadjacent. By \autoref{clm:nonneighbourhoodstable}, there is a vertex $u_3\subseteq B_2$ such that $u_1u_3,u_2u_3\in E$, and there is a clique $K\subseteq N_{B_1}(u_2)$ on $n$ vertices. By \autoref{clm:case1_bd} and \autoref{clm:bb+1}, $\{u_1,u_3\}$ and $K$ are complete, then $\{v_1,u_2,u_3,u_1\}\cup K$ induces a $P_4\vee K_n$.
\end{proof}

\begin{claim}\label{clm:case1_bdssomega}
$\omega(B_1\cup D_2\cup S_2\cup S_5)=\omega(B_1\cup D_2)$. (resp. $\omega(B_2\cup D_1\cup S_1\cup S_3)=\omega(B_2\cup D_1)$.)
\end{claim}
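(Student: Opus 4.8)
The plan is to prove the equivalent inequality $\omega(B_1\cup D_2\cup S_2\cup S_5)\le \omega(B_1\cup D_2)$ (the reverse is trivial from containment) by bounding an arbitrary clique $K$ of the region $R:=B_1\cup D_2\cup S_2\cup S_5$. First I would record the induced structure of $G[R]$ directly from the $C_5$-adjacencies and \autoref{clm:astable}. Since $N_Q(B_1)=\{v_1,v_3,v_4\}$, $N_Q(D_2)=Q\setminus\{v_2\}$, $N_Q(A_2)=\{v_1,v_3\}$, $N_Q(A_5)=\{v_1,v_4\}$, and each $S_i$ is stable, the vertex $v_2$ is anti-complete to $B_1\cup D_2\cup A_2\cup A_5\cup\{v_5\}$, hence is \emph{isolated} in $G[R]$; and $v_5$ is adjacent inside $R$ only to $D_2$. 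Moreover $A_2$ and $A_5$ are stable, being subsets of $S_2,S_5$, so $K$ meets each of them in at most one vertex.

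Next I would split on how $K$ meets $S_2\cup S_5$. If $K\cap(S_2\cup S_5)=\emptyset$ then $K\subseteq B_1\cup D_2$ and we are done. If $v_2\in K$ then $K=\{v_2\}$ by isolation. If $v_5\in K$ then $K\subseteq\{v_5\}\cup D_2$, so $|K|\le 1+\omega(D_2)\le n$ by \autoref{clm:dfomega}, which is at most $\omega(B_1)\le\omega(B_1\cup D_2)$ since we are in the case $\omega(B_1)\ge n+1$. The remaining, and crucial, case is $K\cap(S_2\cup S_5)\subseteq A_2\cup A_5$, containing say a vertex $a_2\in K\cap A_2$.

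The key idea here is to exhibit an induced $P_4$ completely joined to $K\cap(B_1\cup D_2)$. I would use $\{v_1,a_2,v_3,v_4\}$, which induces the path $v_1-a_2-v_3-v_4$: the required non-edges $v_1v_3$, $v_1v_4$ (from the $C_5$) and $v_1a_2\in E$, $a_2v_3\in E$, $a_2v_4\notin E$ all follow from $N_Q(a_2)=\{v_1,v_3\}$. Every vertex of $K\cap(B_1\cup D_2)$ is adjacent to $v_1,v_3,v_4$ (they lie in $N_Q(B_1)$ and in $N_Q(D_2)$) and to $a_2$ (being in the clique $K$); since $K\cap(B_1\cup D_2)$ is itself a clique, $P_4\vee K_n$-freeness forces $|K\cap(B_1\cup D_2)|\le n-1$. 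Hence $|K|\le 2+(n-1)=n+1\le\omega(B_1)\le\omega(B_1\cup D_2)$, where the two $A$-vertices account for the extra $2$; the case where only an $A_5$-vertex occurs is handled by the symmetric path $v_1-a_5-v_4-v_3$. Taking the maximum over $K$ gives the claim, and the parenthetical statement follows verbatim after rotating the $C_5$ by one (using $\omega(B_2)\ge n+1$ and the induced paths $v_2-a_1-v_5-v_4$ and $v_2-a_3-v_4-v_5$).

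I expect the main obstacle to be pinning down the correct $P_4$. The natural ``apex'' vertices of the $C_5$ around this region (here $v_2$) are anti-complete to $B_1\cup D_2$, so a $P_4$ built only from $C_5$-vertices can never be complete to $K$. The trick is to route the path \emph{through} the $A$-vertex $a_2$ itself, which restores adjacency to the $B_1\cup D_2$ part of $K$ while keeping the path induced. A secondary point to check carefully is that the degenerate incidences of $v_2$ and $v_5$ are genuinely covered by the structural observations, and that the resulting bound $n+1$ never exceeds $\omega(B_1\cup D_2)$, which is precisely where the standing case assumption $\omega(B_1)\ge n+1$ is used.
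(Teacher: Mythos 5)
Your proof is correct and uses essentially the same key idea as the paper: an induced $P_4$ on $\{v_1,u,v_3,v_4\}$ routed through a vertex $u\in S_2\cup S_5$, which is complete to an $n$-clique inside $K\cap(B_1\cup D_2)$ (adjacency to $v_1,v_3,v_4$ from membership in $B_1\cup D_2$, adjacency to $u$ from the clique $K$), contradicting $P_4\vee K_n$-freeness, together with the standing assumption $\omega(B_1)\geq n+1$. The paper merely organizes it more compactly: it supposes a clique of size $\omega(B_1\cup D_2)+1$, counts that $|K\cap(B_1\cup D_2)|\geq n$, and applies the same construction uniformly to any $u\in S_2\cup S_5$ (including $v_2$ and $v_5$, since adjacency to $u$ comes for free within $K$), so your separate degenerate cases for $v_2$ and $v_5$ are not needed.
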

\begin{proof}
Suppose that $K\subseteq B_1\cup D_2\cup S_2\cup S_5$ is a clique on $\omega(B_1\cup D_2)+1$ vertices. If $|K\cap(B_1\cup D_2)|\leq\omega(B_1\cup D_2)-2$, then $|K|\leq\omega(S_2\cup S_5)+|K\cap(B_1\cup D_2)|\leq\omega(B_1\cup D_2)$, a contradiction. So $|K\cap(B_1\cup D_2)|\geq\omega(B_1\cup D_2)-1\geq n$. Let $L\subseteq K\cap(B_1\cup D_2)$ be a clique on $n$ vertices. Since $|K|>\omega(B_1\cup D_2)$, there is a vertex $u\in K\cap(S_2\cup S_5)$, then $\{v_1,u,v_3,v_4\}\cup L$ induces a $P_4\vee K_n$.
\end{proof}

\begin{claim}\label{clm:case1_homega}
$\omega(G-S_4)=\omega(B_1\cup D_2)+\omega(B_2\cup D_1)+\omega(F)=\omega(G)-1$.
\end{claim}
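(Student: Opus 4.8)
The plan is to establish the two equalities of \autoref{clm:case1_homega} separately, reusing the structural claims already proved in Case 1. For the first equality I would begin with the decomposition
$$B\cup D\cup F=(B_1\cup D_2)\cup(B_2\cup D_1)\cup F,$$
valid since $B=B_1\cup B_2$ and $D=D_1\cup D_2$ by \autoref{clm:2b2dempty}, and argue that these three parts are \emph{pairwise complete}. This is assembled directly from the earlier claims: $B_1$ and $B_2$ are complete by \autoref{clm:bb+1}; the sets $D_1,D_2,F$ are pairwise complete by \autoref{clm:case1_dd}; and $B_1$ is complete to $D_1\cup F$ while $B_2$ is complete to $D_2\cup F$ by \autoref{clm:case1_bd}. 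Consequently $\omega(B\cup D\cup F)=\omega(B_1\cup D_2)+\omega(B_2\cup D_1)+\omega(F)$, and since $B\cup D\cup F\subseteq V(G-S_4)$ this already gives the lower bound $\omega(G-S_4)\ge\omega(B_1\cup D_2)+\omega(B_2\cup D_1)+\omega(F)$.

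For the matching upper bound I would take an arbitrary clique $K$ in $G-S_4$ and split it along the partition
$$V(G-S_4)=F\ \sqcup\ (B_1\cup D_2\cup S_2\cup S_5)\ \sqcup\ (B_2\cup D_1\cup S_1\cup S_3).$$
This is a genuine partition because $Z=\emptyset$, and the nine sets $S_1,S_2,S_3,S_5,B_1,B_2,D_1,D_2,F$ are pairwise disjoint and exhaust $V(G-S_4)$ (they are $V\setminus S_4$ after applying \autoref{clm:2b2dempty}). Each of the three restrictions of $K$ is again a clique, so $|K\cap F|\le\omega(F)$, while \autoref{clm:case1_bdssomega} bounds the other two blocks by $\omega(B_1\cup D_2)$ and $\omega(B_2\cup D_1)$ respectively; summing gives $|K|\le\omega(B_1\cup D_2)+\omega(B_2\cup D_1)+\omega(F)$. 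Combined with the lower bound, this proves $\omega(G-S_4)=\omega(B_1\cup D_2)+\omega(B_2\cup D_1)+\omega(F)$.

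For the second equality $\omega(G-S_4)=\omega(G)-1$, I would use that $S_4$ is a stable set by \autoref{clm:astable}, so every clique of $G$ meets $S_4$ in at most one vertex, whence $\omega(G)\le\omega(G-S_4)+1$. For the reverse inequality, the lower-bound construction above produces a maximum clique of $G-S_4$ lying entirely inside $B\cup D\cup F$; since $B\cup D\cup F\subseteq N(v_4)$ (already established in Case 1), adjoining $v_4$ yields a clique of $G$ of size $\omega(G-S_4)+1$, so $\omega(G)\ge\omega(G-S_4)+1$.

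I expect the statement itself to be largely a bookkeeping consolidation of the preceding claims, so the only delicate point is getting the partition feeding \autoref{clm:case1_bdssomega} exactly right: one must notice that $S_2\cup S_5$ belongs with the $B_1\cup D_2$ side and $S_1\cup S_3$ with the $B_2\cup D_1$ side, matching precisely the two unions controlled by that claim while leaving $F$ as its own block. Once the partition is seen to be exact and disjoint, the completeness relations needed for the lower bound and the role of $v_4$ follow from citing the correct earlier claims.
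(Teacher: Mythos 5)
Your proof is correct and follows essentially the same route as the paper: the lower bound via the pairwise completeness of $B_1\cup D_2$, $B_2\cup D_1$ and $F$, the upper bound via the partition of $V\setminus S_4$ and \autoref{clm:case1_bdssomega}, and the second equality by combining the stability of $S_4$ with the fact that a maximum clique of $G-S_4$ lies in $B\cup D\cup F\subseteq N(v_4)$. Your write-up is in fact slightly more explicit than the paper's in verifying the pairwise completeness and the exactness of the partition, but the argument is the same.
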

\begin{proof}
Since $B_1\cup D_2$, $B_2\cup D_1$ and $F$ are pairwise complete by \autoref{clm:case1_bd} and \autoref{clm:case1_dd}, we have that $\omega(G-S_4)\geq\omega(B_1\cup D_2)+\omega(B_2\cup D_1)+\omega(F)$. By \autoref{clm:case1_bdssomega},
$$
\begin{aligned}
\omega(G-S_4) & \leq \omega(B_1\cup D_2\cup S_2\cup S_5)+\omega(B_2\cup D_1\cup S_1\cup S_3)+\omega(F) \\
& =\omega(B_1\cup D_2)+\omega(B_2\cup D_1)+\omega(F)
\end{aligned}
$$
So $\omega(G-S_4)=\omega(B_1\cup D_2)+\omega(B_2\cup D_1)+\omega(F)$. Therefore, there is a clique $L\subseteq B_1\cup B_2\cup D_1\cup D_2\cup F$ on $\omega(G-S_4)$ vertices.

Since $S_4$ is a stable set, we have that $\omega(G-S_4)\geq\omega(G)-1$. If $\omega(G-S_4)=\omega(G)$, then $L\cup\{v_4\}$ is a clique on $\omega(G)+1$ vertices since $v_4$ is complete to $B_1\cup B_2\cup D_1\cup D_2\cup F$. This proves that $\omega(G-S_4)=\omega(G)-1$.
\end{proof}

\begin{claim}\label{clm:case1_partsc5free}
Each of $B_1\cup D_2\cup S_2\cup S_5$, $B_2\cup D_1\cup S_1\cup S_3$ and $F$ is $C_5$-free.
\end{claim}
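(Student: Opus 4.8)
The plan is to argue each of the three sets is $C_5$-free by contradiction, via a single uniform mechanism: a $C_5$ always contains an induced $P_4$ (delete any one vertex), so given a putative induced $C_5$ inside the set it suffices to find an induced $P_4$ in it that is completely joined to a clique of size $n$, since this produces a forbidden $P_4\vee K_n$. The cliques of size $n$ will be drawn from $B_1$ or $B_2$, each of which has clique number at least $n+1$ by the Case~1 hypothesis. For $F$ this is immediate: $B_1$ is complete to $F$ by \autoref{clm:case1_bd}, so four consecutive vertices of a $C_5$ in $F$ (an induced $P_4$) together with a $K_n$ taken from a maximum clique of $B_1$ induce a $P_4\vee K_n$, a contradiction.

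For $T_1=B_1\cup D_2\cup S_2\cup S_5$ I would first observe that every vertex of $T_1$ \emph{outside} $A_5$ is complete to $B_2$: for $B_1$ and $A_2$ this is \autoref{clm:bb+1}, for $D_2$ it is \autoref{clm:case1_bd}, and for $v_2,v_5$ it follows from $N_Q(B_2)=\{v_2,v_4,v_5\}$. Thus the vertices in $A_5$ are the only ones of $T_1$ that may fail to be complete to $B_2$. Now suppose $C\subseteq T_1$ induces a $C_5$. Since $A_5\subseteq S_5$ is stable (\autoref{clm:astable}), $C$ contains at most two vertices of $A_5$, and two such vertices are nonadjacent, hence at distance two on $C$. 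In either case I can delete one vertex of $C\cap A_5$ (or, if $C\cap A_5=\emptyset$, any vertex) to obtain an induced $P_4\subseteq C$ meeting $A_5$ in at most one vertex.

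Finally I would fix a maximum clique $K\subseteq B_2$, so $|K|\ge n+1$. Each vertex of the chosen $P_4$ lying outside $A_5$ is complete to all of $K$, while for the at most one vertex $a\in A_5$ of the $P_4$, \autoref{clm:nonneighbourhoodstable} applied with $i=2$ (using $A_5=A_{i-2}$) shows that $\overline{N_{B_2}}(a)$ is stable, so $a$ is nonadjacent to at most one vertex of the clique $K$. Hence at least $n$ vertices of $K$ are complete to all four vertices of the $P_4$, and these induce a $P_4\vee K_n$, the desired contradiction. The set $T_2=B_2\cup D_1\cup S_1\cup S_3$ is then handled symmetrically, with the roles of $B_1,B_2$ and of the exceptional sets $A_5,A_3$ exchanged by the reflection of $Q$ fixing $v_4$. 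I expect the only real obstacle to be the treatment of $A_5$: these are precisely the vertices of $T_1$ that may miss the big clique $B_2$, and the argument only goes through because one can always choose the $P_4$ to contain at most one of them, so that \autoref{clm:nonneighbourhoodstable} costs at most a single clique vertex, which the slack $|K|\ge n+1$ absorbs.
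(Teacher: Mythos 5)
Your proof is correct and takes essentially the same route as the paper: inside a putative $C_5$ choose an induced $P_4$ meeting the stable exceptional set in at most one vertex, then join it to a large clique in $B_2$ (resp.\ $B_1$), using \autoref{clm:nonneighbourhoodstable} to sacrifice at most one clique vertex. The only difference is cosmetic—you isolate $A_5$ rather than all of $S_5$ as the exceptional set by noting $v_5$ is complete to $B_2$, which is in fact marginally cleaner since \autoref{clm:nonneighbourhoodstable} is stated only for vertices outside $Q$.
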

\begin{proof}
Since $F$ and $B_1$ are complete, and $\omega(B_1)\geq n$, we have that $F$ is $P_4$-free.

Suppose that $R\subseteq B_1\cup D_2\cup S_2\cup S_5$ induces a $C_5$. Since $S_5$ is a stable set, $R$ has at most two vertices in $S_5$. Choose an induced $P=P_4\subseteq R$ such that $|P\cap S_5|\leq 1$. Note that $B_1\cup D_2\cup S_2$ and $B_2$ are complete. If $P\cap S_5=\emptyset$, then $P\cup B_2$ contains a $P_4\vee K_n$. If $P\cap S_5=\{u\}$, then by \autoref{clm:nonneighbourhoodstable}, there is a clique $K\subseteq N_{B_2}(u)$ on $n$ vertices.  Then $P\cup K$ induces a $P_4\vee K_n$.
\end{proof}

Then,
$$
\begin{aligned}
\chi(G) & \leq\chi(B_1\cup D_2\cup S_2\cup S_5)+\chi(B_2\cup D_1\cup S_1\cup S_3)+\chi(F)+\chi(S_4) \\
& =\omega(B_1\cup D_2\cup S_2\cup S_5)+\omega(B_2\cup D_1\cup S_1\cup S_3)+\omega(F)+1 \\
& =\omega(B_1\cup D_2)+\omega(B_2\cup D_1)+\omega(F)+1\\
& =\omega(G).
\end{aligned}
$$

\case{2} $\omega(B_2)\leq n$.

By \autoref{clm:2b2dempty}, $B=B_1\cup B_2\cup B_5$ and $D=D_1\cup D_2\cup D_5$. Since $\omega(G-B_1)\leq\omega(S)+\omega(B_2\cup B_5)+\omega(D)+\omega(F)+\omega(Z)\leq 5+2n+3(n-1)+(n-1)+1=6n+2$, we have that $\omega(B_1)\geq\omega(G)-(6n+2)\geq\max\{g_{n-1}, 4n+2\}+(4n-2)$.

\begin{claim}\label{clm:case2_zneighbourhoodclique}
Let $u\in Z$, then $\omega(N_{B_1}(u))\leq n-1$.
\end{claim}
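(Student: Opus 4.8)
The plan is to argue by contradiction and exhibit a forbidden $P_4\vee K_n$. Assume $\omega(N_{B_1}(u))\geq n$ and fix a clique $K\subseteq N_{B_1}(u)$ with $|K|=n$; note that $u$ is then complete to $K$. The target configuration is the induced path $u-b-v_4-v_3$ together with $K$, where $b\in B_2$. Since $v_3,v_4$ are complete to $B_1$ and, by \autoref{clm:bb+1}, so is every vertex of $B_2$, all four of $u,b,v_4,v_3$ are complete to $K$. Moreover $b\in B_2$ has $N_Q(b)=\{v_2,v_4,v_5\}$, so $b$ is adjacent to $v_4$ but not to $v_3$, while $u\in Z$ is anti-complete to $Q$; checking the remaining pairs, $\{u,b,v_4,v_3\}$ induces a $P_4$. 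Hence $\{u,b,v_4,v_3\}\cup K$ would induce a $P_4\vee K_n$, a contradiction. So everything reduces to showing that $u$ genuinely has a neighbour in $B_2$.

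To produce such a neighbour I would play the standing assumption that $G$ has no pair of comparable vertices off against $v_3$. By \autoref{clm:astable}, $u$ is anti-complete to $A\cup Q$ and $Z$ is stable, so $N(u)\subseteq B\cup D\cup F$; in Case~2 this reads $N(u)\subseteq B_1\cup B_2\cup B_5\cup D_1\cup D_2\cup D_5\cup F$. Inspecting the defining $Q$-neighbourhoods, every one of these classes \emph{except} $B_2$ contains $v_3$, i.e.\ is contained in $N(v_3)$. Consequently, if $u$ had no neighbour in $B_2$ we would get $N(u)\subseteq N(v_3)$, making $u$ and $v_3$ comparable and contradicting the assumption that $G$ has no comparable pair. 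Therefore $u$ has a neighbour $b\in B_2$, which completes the argument of the first paragraph.

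The step I expect to be the crux is choosing the right vertex of $Q$ to compare $u$ with. Comparing $u$ with $v_1$ instead would only force a neighbour of $u$ in $B_2\cup B_5\cup D_1$, since $v_1$ is nonadjacent to exactly those classes; but a neighbour in $D_1$ is adjacent to \emph{both} $v_3$ and $v_4$, so it turns $\{u,\cdot,v_4,v_3\}$ into a paw rather than an induced $P_4$ and does not yield the forbidden graph directly. The key observation is that $v_3$ (symmetrically $v_4$) is adjacent to all of the candidate neighbour-classes of $u$ except precisely $B_2$ (resp.\ $B_5$), which is exactly the class supplying the one-sided adjacency to the edge $v_3v_4$ that is needed to build the $P_4$. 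Thus the comparability argument, correctly aimed, pins the neighbour down in the single class that is useful, and the construction above goes through.
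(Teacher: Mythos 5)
Your proof is correct and follows essentially the same route as the paper's: you first force a neighbour $b\in B_2$ of $u$ via the no-comparable-vertices assumption applied to $v_3$ (noting every other possible neighbour class of $u$ lies in $N(v_3)$), and then build the forbidden $P_4\vee K_n$ from the induced path $u$--$b$--$v_4$--$v_3$ joined to a clique $K\subseteq N_{B_1}(u)$ of size $n$, exactly as in the paper. The only difference is presentational: the paper states the comparability step first and the construction second, whereas you set up the construction first; the content is identical.
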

\begin{proof}
If $u$ and $B_2$ are anti-complete, then by \autoref{clm:astable}, $N(u)\subseteq B_1\cup B_5\cup D\cup F\subseteq N(v_3)$, which contradicts our assumption that $G$ has no pairs of comparable vertices. So there is a vertex $b$ in $N_{B_2}(u)$. Suppose that $K\subseteq N_{B_1}(u)$ is a clique on $n$ vertices, then $\{u,b,v_4,v_3\}\cup K$ induces a $P_4\vee K_n$.
\end{proof}

Let $H=B\cup D\cup F\cup S_1\cup S_3\cup S_4=V\setminus(S_2\cup S_5\cup Z)$. By \autoref{clm:bb+1} and \autoref{clm:nonneighbourhoodstable}, $S_1\cup B_2\cup B_5$ and $B_1$ are complete, and for every $u\in S_3\cup S_4\cup D\cup F$, $\overline{N_{B_1}}(u)$ is a stable set. Let $K$ be a maximum clique of $G$. Since $\omega(G-B_1)\leq 6n+2$, we have $|K\cap B_1|\geq n$. By \autoref{clm:neighbourhoodclique} and \autoref{clm:case2_zneighbourhoodclique}, $K\cap(S_2\cup S_5\cup Z)=\emptyset$. So $\omega(H)=\omega(G)$.

\begin{claim}\label{clm:case2_diffnonneighbour}
Let $K\subseteq B_1$ be a clique on $n+2$ vertices. If there are $a_1,a_2\in S_3\cup S_4\cup D\cup F$ and $b_1,b_2\in K$ such that $a_1b_1,a_2b_2\notin E$, then $a_1a_2\in E$.
\end{claim}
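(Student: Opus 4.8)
The plan is to argue by contradiction: assume $a_1a_2\notin E$ and produce a forbidden $P_4\vee K_n$ (or a $2K_2$). The first step is to pin down how $a_1$ and $a_2$ meet the clique $K$. For each $a_i\in S_3\cup S_4\cup D\cup F$, the set $\overline{N_{B_1}}(a_i)$ is stable: it is empty when $a_i\in\{v_3,v_4\}$ (these are complete to $B_1$), and it is stable by \autoref{clm:nonneighbourhoodstable} (with $i=1$, using $B_3=B_4=\emptyset$) otherwise. Since $K\subseteq B_1$ is a clique, each of $a_1,a_2$ therefore has at most one non-neighbour in $K$; combined with the hypothesis, $a_1$ is complete to $K\setminus\{b_1\}$ and $a_2$ is complete to $K\setminus\{b_2\}$, each a clique on at least $n+1$ vertices.

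Second, I would dispose of the case $b_1\neq b_2$, which I expect to be the clean one. Here $a_1b_2,a_2b_1\in E$ while $a_1b_1$, $a_2b_2$, $a_1a_2$ are non-edges and $b_1b_2\in E$, so $a_1-b_2-b_1-a_2$ is an induced $P_4$. Moreover $K\setminus\{b_1,b_2\}$ is a clique on $n$ vertices complete to all four of these vertices, so $\{a_1,b_1,b_2,a_2\}\cup(K\setminus\{b_1,b_2\})$ induces $P_4\vee K_n$, contradicting that $G$ is $P_4\vee K_n$-free.

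Third, in the remaining case $b_1=b_2=:b$, I would first constrain the positions of $a_1,a_2$. Since $v_3,v_4$ are complete to $B_1$, neither $a_i$ equals $v_3$ or $v_4$; and if some $a_i$ lay in $A_3\cup A_4\cup D_1$ (so $a_i\not\sim v_1$ while $a_i$ is adjacent to one of $v_3,v_4$), then $a_i-v_j-b-v_1$ with $v_j\in\{v_3,v_4\}\cap N(a_i)$ would be an induced $P_4$ complete to the $(n+1)$-clique $K\setminus\{b\}$, again a forbidden $P_4\vee K_n$. Thus $a_1,a_2\in D_2\cup D_5\cup F$, and in particular both are adjacent to $v_1,v_3,v_4$; now $a_1,a_2,b$ are pairwise non-adjacent and all complete to $K\setminus\{b\}$. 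Using that $G$ has no comparable pair, I pick $p\in N(a_1)\setminus N(a_2)$ and $q\in N(a_2)\setminus N(a_1)$; if $pq\notin E$ then $\{a_1,p,a_2,q\}$ induces a $2K_2$, so $pq\in E$ and $a_1-p-q-a_2$ is an induced $P_4$.

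The main obstacle is the join in this last case: producing an $n$-clique complete to all four vertices of the $P_4$ $a_1-p-q-a_2$. Since $a_1,a_2$ behave identically with respect to the large clique $K\setminus\{b\}\subseteq B_1$, the required asymmetry must come entirely from $p,q$, so I expect to have to choose them carefully—taking $p\in N_{B_1}(a_1)\cap\overline{N_{B_1}}(a_2)$ and $q\in N_{B_1}(a_2)\cap\overline{N_{B_1}}(a_1)$ inside $B_1$, so that $v_3,v_4$ (complete to $B_1$ and adjacent to $a_1,a_2$, with $v_3v_4\in E$) are available to build the join—and then exhibiting a common clique-neighbourhood of $p,q$ of size $n-2$ to combine with $v_3,v_4$. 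Controlling this common neighbourhood, equivalently showing that the symmetric configuration cannot coexist with the forced induced $P_4$, is the crux; the stability statement \autoref{clm:nonneighbourhoodstable} and the $B_1$-complete pentagon vertices $v_1,v_3,v_4$ are the tools I would push here, with $2K_2$-freeness ruling out the degenerate sub-configurations.
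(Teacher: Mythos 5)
Your treatment of the case $b_1\neq b_2$ is correct and is precisely the paper's proof: by \autoref{clm:nonneighbourhoodstable} the set $\overline{N_{B_1}}(a_i)$ is stable (you even note $a_i\neq v_3,v_4$, which the paper leaves implicit), so each $a_i$ misses at most one vertex of the clique $K$; then, if $a_1a_2\notin E$, the path $a_1-b_2-b_1-a_2$ is an induced $P_4$ which, joined to the $n$-clique $K\setminus\{b_1,b_2\}$, yields a forbidden $P_4\vee K_n$. The case that stalled you, $b_1=b_2$, is not a gap relative to the paper: the paper's proof does not treat it at all. Its first sentence asserts $a_1b_2,a_2b_1\in E$, which is impossible when $b_1=b_2$ (it would contradict $a_1b_1\notin E$), so the paper is tacitly assuming $b_1\neq b_2$; the claim's statement is simply loosely worded. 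Moreover, only the distinct case is ever needed: the claim is invoked exactly once, in the proof of \autoref{clm:case2_omegaofcomplete}, where $b_1,\ldots,b_{4n-1}$ are distinct vertices of $K$, and the corresponding vertices $a_1,\ldots,a_{4n-1}$ are then automatically distinct, since a single vertex of $S_3\cup S_4\cup D\cup F$ cannot have two non-neighbours inside the clique $K$ (its non-neighbourhood in $B_1$ is stable).

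Your partial analysis of $b_1=b_2$ is sound as far as it goes: ruling out $a_i\in A_3\cup A_4\cup D_1$ via the induced path $a_i-v_j-b-v_1$ joined to $K\setminus\{b\}$ is a correct use of the structure (it uses $D_3=D_4=\emptyset$, which holds in Case 2), and the non-comparability trick does produce an induced $P_4$ of the form $a_1-p-q-a_2$. Your diagnosis of the obstacle is also accurate: the distinguishing vertices $p,q$ need not lie in $B_1$, and even when they fall in the sets covered by \autoref{clm:nonneighbourhoodstable}, deleting their possible non-neighbours from the $(n+1)$-clique $K\setminus\{b\}$ guarantees only an $(n-1)$-clique complete to all four path vertices, one short of what is needed. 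So you identified a genuine difficulty, but it is one the paper sidesteps entirely by (implicitly) restricting to distinct $b_1,b_2$; the efficient move here was to observe that the sole downstream application supplies distinct $b_i$'s, prove the claim in that form, and stop.
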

\begin{proof}
By \autoref{clm:nonneighbourhoodstable}, $a_1b_2,a_2b_1\in E$, and $\{a_1,a_2\}$ and $K\setminus\{b_1,b_2\}$ are complete. Suppose that $a_1a_2\notin E$, then $\{a_1,b_2,b_1,a_2\}$ induces a $P_4$, and then $\{a_1,b_2,b_1,a_2\}\cup(K\setminus\{b_1,b_2\})$ induces a $P_4\vee K_n$.
\end{proof}

\begin{claim}\label{clm:case2_omegaofcomplete}
Let $K\subseteq B_1$ be a clique on $m$ vertices with $m\geq 4n-1$, then there are at least $m-(4n-2)$ vertices in $K$ which are complete to $H\setminus B_1$.
\end{claim}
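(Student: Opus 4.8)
The plan is to count the \emph{bad} vertices of $K$, i.e.\ those vertices of $K$ that are not complete to $H\setminus B_1$, and show there are at most $4n-2$ of them. First I would pin down where badness can arise. In Case 2 we have $B=B_1\cup B_2\cup B_5$, so $H\setminus B_1=(B_2\cup B_5\cup S_1)\cup(S_3\cup S_4\cup D\cup F)$. As already recorded just before \autoref{clm:case2_diffnonneighbour}, $B_1$ (hence $K$) is complete to $S_1\cup B_2\cup B_5$. Therefore a vertex $b\in K$ can fail to be complete to $H\setminus B_1$ only by having a nonneighbour in $S_3\cup S_4\cup D\cup F$; call such a $b$ bad, and for each bad $b$ fix a \emph{witness} $a_b\in S_3\cup S_4\cup D\cup F$ with $a_b b\notin E$. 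Since $v_3,v_4$ are complete to $B_1$, every witness in fact lies in $A_3\cup A_4\cup D\cup F$, which keeps us inside the scope of \autoref{clm:nonneighbourhoodstable}.

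The heart of the argument is a pairing between two facts. On one side, \autoref{clm:nonneighbourhoodstable} tells us that for each $a\in A_3\cup A_4\cup D\cup F$ the set $\overline{N_{B_1}}(a)$ is stable; as $K\subseteq B_1$ is a clique, $a$ has at most one nonneighbour in $K$. Consequently the map $b\mapsto a_b$ is injective, since two distinct bad vertices cannot share the same witness. On the other side, because $m\geq 4n-1\geq n+2$, any two bad vertices $b,b'$ can be completed to an $(n+2)$-clique inside $K$, so \autoref{clm:case2_diffnonneighbour} applies with $(a_1,a_2,b_1,b_2)=(a_b,a_{b'},b,b')$ and forces $a_ba_{b'}\in E$. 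Hence the witness set $W=\{a_b:b\text{ bad}\}$ is a clique in $S_3\cup S_4\cup D\cup F$, and by injectivity its size equals the number of bad vertices.

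It then remains to bound $\omega(S_3\cup S_4\cup D\cup F)$. Since $S_3$ and $S_4$ are stable (\autoref{clm:astable}), any clique meets each of them in at most one vertex. For the part inside $D\cup F=D_1\cup D_2\cup D_5\cup F$, a clique $C$ decomposes as $C=(C\cap D_1)\cup(C\cap D_2)\cup(C\cap D_5)\cup(C\cap F)$, and \autoref{clm:dfomega} gives $\omega(D_i)\leq n-1$ for each $i$ and $\omega(F)\leq n-1$, so $|C|\leq 4(n-1)$. Adding the two contributions from $S_3$ and $S_4$ yields $|W|\leq\omega(S_3\cup S_4\cup D\cup F)\leq 4n-2$. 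Thus at most $4n-2$ vertices of $K$ are bad, so at least $m-(4n-2)$ vertices of $K$ are complete to $H\setminus B_1$, as required.

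I expect the only delicate point to be the interplay of the two key facts, and in particular the need to verify \emph{distinctness} of the witnesses before invoking \autoref{clm:case2_diffnonneighbour}: that claim is only meaningful when $a_1\neq a_2$, and it is precisely the ``at most one nonneighbour in $K$'' consequence of \autoref{clm:nonneighbourhoodstable} that guarantees this while simultaneously delivering the injectivity used in the final count. Everything else is routine bookkeeping of clique numbers.
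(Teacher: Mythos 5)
Your proof is correct and takes essentially the same route as the paper's: non-completeness to $H\setminus B_1$ must be witnessed in $S_3\cup S_4\cup D\cup F$, the witnesses are pairwise adjacent by \autoref{clm:case2_diffnonneighbour}, and the bound $\omega(S_3\cup S_4\cup D\cup F)\leq 2+4(n-1)=4n-2$ finishes the count. The only difference is cosmetic: you count bad vertices directly and make explicit the injectivity of the witness map (via \autoref{clm:nonneighbourhoodstable}), a point the paper's contradiction argument uses implicitly when it asserts the $4n-1$ witnesses form a $K_{4n-1}$.
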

\begin{proof}
Suppose that there are $b_1,b_2,\ldots,b_{4n-1}\in K$, such that for every $i\in\{1,2,\ldots,4n-1\}$, $b_i$ and $H\setminus B_1$ are not complete. By \autoref{clm:nonneighbourhoodstable}, there are $a_1,a_2,\ldots,a_{4n-1}\in S_3\cup S_4\cup D\cup F$, such that $a_ib_i\notin E$ for every $i\in\{1,2,\ldots,4n-1\}$. By \autoref{clm:case2_diffnonneighbour}, $\{a_1,a_2,\ldots,a_{4n-1}\}$ induces a $K_{4n-1}$. But $\omega(S_3\cup S_4\cup D\cup F)\leq 2+3(n-1)+(n-1)=4n-2$, a contradiction.
\end{proof}

\begin{claim}\label{clm:case2_htog}
If $\chi(H)=\omega(H)$, then $\chi(G)=\omega(G)$.
\end{claim}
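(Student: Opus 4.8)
The plan is to fix an optimal colouring $c$ of $H$ using $\omega(H)=\omega(G)$ colours and extend it to the only uncoloured vertices, namely $S_2\cup S_5\cup Z$. Everything hinges on the huge clique inside $B_1$. Let $K$ be a maximum clique of $B_1$; since $\omega(B_1)\ge\omega(G)-(6n+2)$ is large, \autoref{clm:case2_omegaofcomplete} gives a subclique $K^+\subseteq K$ with $|K^+|\ge\omega(G)-10n$ all of whose vertices are complete to $H\setminus B_1$. Because $K^+$ is a clique complete to $H\setminus B_1$, the colour sets $c(K^+)$ and $c(H\setminus B_1)$ are disjoint, and since $|c(K^+)|=|K^+|$ this forces $|c(H\setminus B_1)|\le\omega(G)-|K^+|\le 10n$. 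Thus only boundedly many colours appear off $B_1$; this is the fact that drives the whole argument.

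Next I would record, for each $w\in S_2\cup S_5\cup Z$, that $\omega(N_{B_1}(w))\le n-1$ (by \autoref{clm:neighbourhoodclique} for $A_2,A_5$, by \autoref{clm:case2_zneighbourhoodclique} for $Z$, and trivially for $v_2,v_5$, which have no neighbour in $B_1$). Hence $w$ is non-adjacent to the clique $K^+_w:=K^+\setminus N(w)$, with $|K^+_w|\ge|K^+|-(n-1)$. The colours on $K^+_w$ are the natural candidates for $w$: any neighbour of $w$ in $H\setminus B_1$ is complete to $K^+_w$, so its colour lies in $c(H\setminus B_1)$, is disjoint from $c(K^+_w)$, and never blocks a candidate. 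The only candidates that can be blocked are those also appearing on $N_{B_1}(w)$.

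The crux is to bound $|c(N_{B_1}(w))\cap c(K^+_w)|$. First, $2K_2$-freeness forces each $y\in N_{B_1}(w)$ with $c(y)\in c(K^+_w)$ to have a \emph{unique} non-neighbour in $K^+_w$: two non-neighbours $x,x'$ of $y$ in $K^+_w$ would make $\{w,y,x,x'\}$ an induced $2K_2$. That unique non-neighbour is the vertex of $K^+_w$ sharing $y$'s colour. Now suppose two such vertices $y_1,y_2$ of distinct colours are non-adjacent; their unique non-neighbours $x_1\ne x_2\in K^+_w$ then make $\{y_1,x_2,x_1,y_2\}$ an induced $P_4$, and the remaining clique $K^+_w\setminus\{x_1,x_2\}$, of size $\ge n$, is complete to it, yielding a forbidden $P_4\vee K_n$. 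Hence distinctly coloured such $y$'s are pairwise adjacent, so they form a clique in $N_{B_1}(w)$, giving $|c(N_{B_1}(w))\cap c(K^+_w)|\le\omega(N_{B_1}(w))\le n-1$. Consequently every $w$ has at least $|K^+_w|-(n-1)\ge\omega(G)-11n+1$ colours of $c(K^+)$ free of all its neighbours in $H$.

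Finally I would carry out the extension. I colour $Z$ first: each $z\in Z$ is anti-complete to $S$ (\autoref{clm:astable}) with all its neighbours in $H$, so any of its many available colours works. For $S_2$ and $S_5$ the only internal edges run between $A_2$ and $A_5$, so rather than solving a delicate bipartite list-colouring I would split $c(K^+)$ into two halves $C_L\sqcup C_R$ and colour the stable set $S_2$ from $C_L$ and $S_5$ from $C_R$; the disjointness of the palettes makes the cross edges automatic, and a count (at most $n-1$ colours lost to adjacency and at most $n-1$ to blocking) shows each vertex still has a free colour in its half. This gives $\chi(G)\le\omega(G)$, hence equality. I expect the main obstacle to be precisely the $P_4\vee K_n$ estimate bounding the blocked candidate colours by $n-1$; once that is in hand, the bound $|c(H\setminus B_1)|\le 10n$ and the disjoint‑halves trick for the $A_2$–$A_5$ edges are comparatively routine.
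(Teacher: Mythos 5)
Your proof is correct, but it takes a genuinely different route from the paper's. The paper never extends the colouring of $H$: it takes a clique $K\subseteq B_1$ of size $\max\{g_{n-1},4n+2\}$ complete to $H\setminus B_1$, lets $L$ be the union of the colour classes of $H$ that meet $K$ (so $L\subseteq B_1$), and then \emph{recolours} $L\cup S_2\cup S_5\cup Z$ from scratch: this set is shown to be $P_4\vee K_{n-1}$-free with $\omega(L\cup S_2\cup S_5\cup Z)=\omega(L)=|K|\geq g_{n-1}$, so the induction hypothesis on $n$ colours it with $|K|$ colours, and the remaining $\omega(H)-|K|$ classes of $M=H\setminus L$ are pasted back. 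You instead keep the colouring of $H$ intact and extend it to $S_2\cup S_5\cup Z$, using the disjointness of $c(K^+)$ from $c(H\setminus B_1)$, the bound $\omega(N_{B_1}(w))\leq n-1$ from \autoref{clm:neighbourhoodclique} and \autoref{clm:case2_zneighbourhoodclique}, a unique-non-neighbour/$P_4\vee K_n$ argument bounding the blocked palette colours by $n-1$ (structurally a cousin of \autoref{clm:case2_diffnonneighbour}, with $2K_2$-freeness through $w$ replacing \autoref{clm:nonneighbourhoodstable}), and the disjoint-halves trick for the $A_2$--$A_5$ edges. The trade-off: your argument is self-contained and removes the dependence on the induction hypothesis $g_{n-1}$ from this claim entirely (in the paper, this claim is one of the two places where the induction on $n$ is invoked; it is still needed later for $X$), at the cost of a more delicate counting argument. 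Two small blemishes, neither fatal: after subtracting both losses the count should read $|K^+_w|-(n-1)\geq\omega(G)-12n+2$ rather than $\omega(G)-11n+1$ (still positive since $\omega(G)\geq 14n+2$); and the headline bound $|c(H\setminus B_1)|\leq 10n$ is never actually used---only the disjointness of the two palettes is.
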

\begin{proof}
Since $\omega(B_1)\geq \max\{g_{n-1}, 4n+2\}+(4n-2)$, there is a clique $K\subseteq B_1$ on $\max\{g_{n-1}, 4n+2\}$ vertices which is complete to $H\setminus B_1$ by \autoref{clm:case2_omegaofcomplete}. Suppose that $H$ is already coloured by $\omega(H)$ colours. Let $L=\{u\in H: u$ has the same colour with some vertex in $K$.\} and $M=H\setminus L$. Then $\chi(L)\leq\max\{g_{n-1}, 4n+2\}$, and $\chi(M)\leq\omega(H)-\max\{g_{n-1}, 4n+2\}$. Since $K\subseteq L$, we have $\omega(L)\geq|K|=\max\{g_{n-1}, 4n+2\}$, and so $\chi(L)=\omega(L)=\max\{g_{n-1}, 4n+2\}$. Since $K$ and $H\setminus B_1$ are complete, we have $L\subseteq B_1$.

By \autoref{clm:neighbourhoodclique} and \autoref{clm:case2_zneighbourhoodclique}, $\omega(L\cup S_2\cup S_5\cup Z)=\omega(L)$. Suppose that $P\subseteq L\cup S_2\cup S_5\cup Z$ induces a $P_4\vee K_{n-1}$. If $P\subseteq L\cup S_2\cup S_5$, then $P\subseteq N(v_1)$, and so $P\cup\{v_1\}$ induces a $P_4\vee K_n$. So there is a vertex $u\in P\cap Z$. Note that $\omega(N_P(u))=n$. By \autoref{clm:astable}, $\omega(N_{P\cap B_1}(u))=n$. But $\omega(N_{B_1}(u))\leq n-1$, a contradiction. So $L\cup S_2\cup S_5\cup Z$ is $P_4\vee K_{n-1}$-free. Since $\omega(L)\geq g_{n-1}$, we have $\chi(L\cup S_2\cup S_5\cup Z)=\omega(L)$. Then,
$$
\begin{aligned}
\chi(G) & \leq\chi(L\cup S_2\cup S_5\cup Z)+\chi(M) \\
& \leq\max\{g_{n-1}, 4n+2\}+(\omega(H)-\max\{g_{n-1}, 4n+2\}) \\
& =\omega(G).
\end{aligned}
$$
\end{proof}

\begin{claim}\label{clm:case2_3p+3}
Suppose that $L_1,L_2$ are disjoint cliques in $G$, such that $|L_1|=|L_2|=\omega(L_1\cup L_2)\geq 3n+3$. Then every vertex in $L_1$ ($L_2$) has exactly one nonneighbour in $L_2$ ($L_1$).
\end{claim}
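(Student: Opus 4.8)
Write $m=|L_1|=|L_2|=\omega(L_1\cup L_2)$. The ``at least one nonneighbour'' half is immediate from maximality: if some $a\in L_1$ were complete to $L_2$, then $\{a\}\cup L_2$ would be a clique of size $m+1>\omega(L_1\cup L_2)$, impossible; symmetrically for $L_2$. So the whole content is the upper bound ``at most one'', and I would run the proof through the \emph{non-edge bipartite graph} $H$ on $L_1\sqcup L_2$ whose edges are exactly the non-edges of $G$ between $L_1$ and $L_2$. Two translations are basic: since $L_1,L_2$ are cliques, a $C_4$ (i.e.\ $K_{2,2}$) in $H$ is exactly a $2K_2$ in $G$, so $H$ is $C_4$-free; and ``at least one nonneighbour'' says $H$ has no isolated vertex. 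The goal becomes: \emph{$H$ is a perfect matching}, i.e.\ every vertex of $H$ has degree exactly one.

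The engine is the translation of $P_4\vee K_n$-freeness. A direct check shows that for $x,y\in L_2$ and $u,v\in L_1$ the quadruple induces a $P_4$ (namely $u$-$v$-$x$-$y$) in $G$ if and only if $x$-$u$-$y$-$v$ is an induced $P_4$ in $H$; moreover the vertices of $L_1$ complete in $G$ to $\{u,v,x,y\}$ are exactly $L_1\setminus(N_H(x)\cup N_H(y))$, and those of $L_2$ are exactly $L_2\setminus(N_H(u)\cup N_H(v))$. Hence a $P_4\vee K_n$ with its clique inside $L_1$ (resp.\ $L_2$) appears as soon as $m-|N_H(x)\cup N_H(y)|\ge n$ (resp.\ for $u,v$). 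So $P_4\vee K_n$-freeness gives, for every induced $P_4$ $x$-$u$-$y$-$v$ in $H$,
\[
|N_H(x)\cup N_H(y)|\ge m-n+1\quad\text{and}\quad |N_H(u)\cup N_H(v)|\ge m-n+1 .
\]
As $H$ is $C_4$-free we have $|N_H(x)\cap N_H(y)|\le 1$, so these force the endpoint degree sums $\deg_H(x)+\deg_H(y)\ge m-n+2$ (and likewise for $u,v$).

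\textbf{The contradiction.} Suppose $H$ is not a matching. By the $L_1\leftrightarrow L_2$ symmetry of the hypotheses I may take a vertex $u^\ast$, say in $L_1$, realising the maximum $H$-degree $\Delta\ge 2$, with $H$-neighbours $z_1,\dots,z_\Delta\in L_2$. A clique bound controls their degrees: if two of them had $H$-degree $1$ (their only nonneighbour being $u^\ast$), then $\{z_i,z_j\}\cup(L_1\setminus\{u^\ast\})$ would be a clique of size $m+1$; hence at least $\Delta-1$ of the $z_i$ have $H$-degree $\ge 2$. For any two such $z_i,z_j$ the path $z_i$-$u^\ast$-$z_j$-$v$ (with $v\in N_H(z_j)\setminus\{u^\ast\}$) is an induced $P_4$ of $H$, so by the displayed bound $\deg_H(z_i)+\deg_H(z_j)\ge m-n+2$; thus at most one of them has degree below $(m-n)/2+1$, and at least $\Delta-2$ of the $z_i$ have $H$-degree $\ge(m-n)/2+1$. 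On the other hand the sets $N_H(z_i)\subseteq L_1$ pairwise meet exactly in $\{u^\ast\}$ (by $C_4$-freeness), whence $\sum_i\deg_H(z_i)=|\bigcup_iN_H(z_i)|+(\Delta-1)\le m+\Delta-1$. Comparing the two estimates gives $(\Delta-2)\big((m-n)/2+1\big)\le m+\Delta-1$, bounding $\Delta$ by an absolute constant, while applying the degree-sum inequality to the two largest $z_i$ gives $2\Delta\ge m-n+2$; for $m\ge 3n+3$ these are incompatible. This contradiction shows $H$ is a perfect matching, which is exactly the assertion.

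\textbf{Where the difficulty lies.} The conceptual obstacle is that the \emph{naive} route---fix $a\in L_1$ with nonneighbours $b_1,b_2$, build the induced $P_4$ $a$-$a'$-$b_2$-$b_1$, and attach a $K_n$---cannot attach the clique inside $L_1\cup L_2$ by any local argument: the vertices ``missing exactly one'' vertex of a clique lying in one side all live on the \emph{other} side and are mutually adjacent, and the witness $b_1$ is anticomplete to the very clique one wants to use. This is what forces the passage to $H$ and to a global degree count, since the attachable clique has size exactly $m-|N_H(b_1)\cup N_H(b_2)|$ and the only leverage on these $H$-degrees is the $P_4\vee K_n$-inequality fed back on itself. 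The technical crux is therefore the counting in Step~2: squeezing it so that the lower bound $\Delta\ge(m-n)/2+1$ and the constant upper bound on $\Delta$ already collide at $m=3n+3$. The estimate above is comfortable for larger $n$ but tight for $n\in\{1,2\}$, where I would sharpen the ``$\Delta-2$'' to ``$\Delta-1$'', keep the $+O(1)$ contributions of the low-degree neighbours, or dispose of these few values directly via \autoref{thm:2k2gem}.
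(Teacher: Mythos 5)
Your proof is correct where the claim is actually used, and it takes a genuinely different route from the paper's. The paper works inside $G$ and argues structurally: its key inner step (its statement (1)) produces an ``exceptional'' vertex $s_1\in L_1$ such that every pair of vertices of $L_1\setminus\{s_1\}$ has at least $n$ common neighbours in $L_2$ (if some pair has small common neighbourhood, one of its members has at least $n+2$ non-neighbours in $L_2$, and $2K_2$-freeness then forces every other vertex of $L_1$ to miss at most one of those); it then takes $u_1$ with two non-neighbours $u_3,u_4$, builds the $P_4$ on $\{u_1,u_2,u_3,u_4\}$, deduces $|N_{L_2}(u_1)\cap N_{L_2}(u_2)|\le n-1$, which forces $u_1=s_1$, and finally attaches a $K_n$ inside $N_{L_1}(u_3)\cap N_{L_1}(u_4)$ after choosing $u_3,u_4\ne s_2$. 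You instead pass to the bipartite non-edge graph $H$ and run a purely quantitative argument: $C_4$-freeness of $H$, the degree-sum inequality $\deg_H(z_i)+\deg_H(z_j)\ge m-n+2$ coming from induced $P_4$'s of $H$, the sunflower bound $\sum_i\deg_H(z_i)\le m+\Delta-1$ around a maximum-degree vertex, and a collision between the resulting bounds on $\Delta$. Your translations of both hypotheses into $H$ are accurate and each estimate checks out; your scheme is more symmetric and avoids the paper's exceptional-vertex case analysis, while the paper's argument needs no counting and exhibits the forbidden $P_4\vee K_n$ directly.

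The one soft spot is the one you flagged yourself: with your constants ($\Delta\le 4$ against $2\Delta\ge m-n+2$) the collision fails exactly for $n=1$, $m\in\{6,7\}$, where both bounds permit $\Delta=4$; for $n=2$ one already gets $\Delta\ge 5$, so only $n=1$ is genuinely problematic. This is harmless in context: the claim sits inside the proof of \autoref{thm:c5}, which disposes of $n=1$ via \autoref{thm:2k2gem} and assumes $n\ge 2$ throughout. (The paper's own proof likewise needs $n\ge 2$: it requires $|\overline{N_{L_2}}(u_1)|\ge 3$ but its estimate only yields $|\overline{N_{L_2}}(u_1)|\ge n+1$.) Moreover, your first proposed repair genuinely closes the gap: at least $\Delta-1$ (not merely $\Delta-2$) of the $z_i$ have degree at least $(m-n+2)/2$, since if some $z_i$ has degree $1$ then pairing it (as path endpoint) with any other $z_j$ gives $\deg_H(z_j)\ge m-n+1$, and if no $z_i$ has degree $1$ then at most one can lie below the threshold. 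Then $(\Delta-1)(m-n+2)/2\le m+\Delta-1$ together with $\Delta\ge(m-n+2)/2$ forces $\left(\frac{m-n}{2}\right)^2\le m$, which is false for every $m\ge 3n+3$ with $n\ge 1$. So your argument can be made uniform in $n$, with no appeal to \autoref{thm:2k2gem} at all.
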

\begin{proof}
Suppose that $u\in L_1$ and $L_2$ are complete, then $\omega(L_1\cup L_2)\geq|L_2|+1$, a contradiction.

\begin{comment}
There is a vertex $s_1\in L_1$, such that for any $t_1,t_2\in L_1\setminus\{s_1\}$, $|N_{L_2}(t_1)\cap N_{L_2}(t_2)|\geq n$. Moreover if there is a vertex $t_1\in L_1\setminus\{s_1\}$ such that $|N_{L_2}(t_1)\cap N_{L_2}(s_1)|\leq n-1$, then $|N_{L_2}(s_1)|\leq \lfloor\frac{|L_1|+n+1}{2}\rfloor$. (By symmetry, there is a vertex $s_2\in L_2$, such that for any $t_1,t_2\in L_2\setminus\{s_2\}$, $|N_{L_1}(t_1)\cap N_{L_1}(t_2)|\geq n$.)
\end{comment}

Now we prove that: 

There is a vertex $s_1\in L_1$, such that for any $t_1,t_2\in L_1\setminus\{s_1\}$, $|N_{L_2}(t_1)\cap N_{L_2}(t_2)|\geq n$. Moreover if there is a vertex $t_1\in L_1\setminus\{s_1\}$ such that $|N_{L_2}(t_1)\cap N_{L_2}(s_1)|\leq n-1$, then $|N_{L_2}(s_1)|\leq \lfloor\frac{|L_1|+n+1}{2}\rfloor$. (By symmetry, there is a vertex $s_2\in L_2$, such that for any $t_1,t_2\in L_2\setminus\{s_2\}$, $|N_{L_1}(t_1)\cap N_{L_1}(t_2)|\geq n$.)\hfill{(1)}

If for any $r_1,r_2\in L_1$, $|N_{L_2}(r_1)\cap N_{L_2}(r_2)|\geq n$, then we are done. Suppose that $a_1,a_2\in L_1$ such that $|N_{L_2}(r_1)\cap N_{L_2}(r_2)|\leq n-1$, then,
$$
\begin{aligned}
|N_{L_2}(r_1)|+|N_{L_2}(r_2)| & =|N_{L_2}(r_1)\cup N_{L_2}(r_2)| + |N_{L_2}(r_1)\cap N_{L_2}(r_2)| \\
& \leq |L_2| + |N_{L_2}(r_1)\cap N_{L_2}(r_2)| \\
& \leq |L_1|+n-1.
\end{aligned}
$$
By symmetry we may assume that $|N_{L_2}(r_1)|\leq \lfloor\frac{|L_1|+n+1}{2}\rfloor$. Then $|\overline{N_{L_2}}(r_1)|\geq \lceil\frac{|L_1|+n+1}{2}\rceil\geq n+2$. Since $G$ is $2K_2$-free, every vertex in $L_1\setminus{r_1}$ has at most one nonneighbour in $\overline{N_{L_2}}(r_1)$. So for any $r_3,r_4\in L_1\setminus\{r_1\}$, we have $|N_{\overline{N_{L_2}}(r_1)}(r_3)\cap N_{\overline{N_{L_2}}(r_1)}(r_4)|\geq |\overline{N_{L_2}}(r_1)|-2\geq n$. Let $r_1$ be the $s_1$ we are finding, then we complete the proof of (1).

Now suppose that $u_1\in L_1$ has nonneighbours $u_3,u_4\in L_2$. Since $G$ is $2K_2$-free, every vertex in $L_1\setminus\{u_1\}$ is adjacent to $u_3$ or $u_4$. If $L_1\setminus\{u_1\}$ and $\{u_3,u_4\}$ are complete, then $\omega(L_1\cup L_2)\geq |L_1|+1$. So there is a vertex $u_2\in L_1$ which is adjacent to exactly one of $u_3$ and $u_4$, say $u_3$, then $\{u_1,u_2,u_3,u_4\}$ induces a $P_4$. So $|N_{L_2}(u_1)\cap N_{L_2}(u_2)|\leq n-1$, or else a clique on $n$ vertices in $N_{L_2}(u_1)\cap N_{L_2}(u_2)$ together with $\{u_1,u_2,u_3,u_4\}$ induces a $P_4\vee K_n$. Since $u_1$ is an arbitrary vertex in $L_1$ which has two or more nonneighbours in $L_2$, we may assume that $u_1=s_1$. Since $u_3,u_4$ are two arbitrary vertices in $\overline{N_{L_2}}(u_1)$, and $|\overline{N_{L_2}}(u_1)|\geq 3$, we may assume that $u_3,u_4\neq s_2$, then $|N_{L_1}(u_3)\cap N_{L_1}(u_4)|\geq n$, and so a clique on $n$ vertices in $N_{L_1}(u_3)\cap N_{L_1}(u_4)$ together with $\{u_1,u_2,u_3,u_4\}$ induces a $P_4\vee K_n$.
\end{proof}

\begin{claim}\label{clm:case2_4p+2}
Suppose that $L_1,L_2$ are cliques in $G$, such that $|L_1|=|L_2|=\omega(L_1\cup L_2)\geq 4n+2$, Then every vertex in $L_1\setminus L_2$ ($L_2\setminus L_1$) has exactly one nonneighbour in $L_2\setminus L_1$ ($L_1\setminus L_2$).
\end{claim}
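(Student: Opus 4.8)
The plan is to reduce to the disjoint-clique situation already handled in \autoref{clm:case2_3p+3} by factoring out the common part $I = L_1 \cap L_2$. Write $L_1' = L_1\setminus L_2$ and $L_2' = L_2 \setminus L_1$; these are disjoint cliques with $|L_1'| = |L_2'|$ because $|L_1| = |L_2|$. The crucial structural fact is that $I$ is complete to both $L_1'$ and $L_2'$, since $I \subseteq L_1$ and $I \subseteq L_2$ and both $L_1, L_2$ are cliques. From this I would first record two easy consequences: (i) $\omega(L_1' \cup L_2') = |L_1'|$, because any clique $C \subseteq L_1'\cup L_2'$ extends to the clique $C \cup I$ of $L_1 \cup L_2$, forcing $|C| + |I| \le \omega(L_1\cup L_2) = |L_1|$; and (ii) every $u \in L_1'$ has \emph{at least one} nonneighbour in $L_2'$, for otherwise $u$ would be complete to $L_2 = I \cup L_2'$ and $\{u\} \cup L_2$ would be a clique of size $\omega(L_1\cup L_2)+1$.

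It then remains to prove the \emph{at most one} half, and here I would split on the size of $I$. If $|I| \le n-1$, then $|L_1'| = |L_1| - |I| \ge (4n+2)-(n-1) = 3n+3$, so by (i) the disjoint cliques $L_1', L_2'$ satisfy the hypotheses of \autoref{clm:case2_3p+3}, and that claim gives exactly one nonneighbour, finishing this case. If instead $|I| \ge n$, I argue directly: suppose some $u_1 \in L_1'$ has two nonneighbours $u_3, u_4 \in L_2'$. Since $G$ is $2K_2$-free, every vertex of $L_1 \setminus \{u_1\}$ is adjacent to $u_3$ or $u_4$ (a vertex adjacent to neither would complete a $2K_2$ with $u_1$), and if all of $L_1\setminus\{u_1\}$ were adjacent to both then $(L_1\setminus\{u_1\})\cup\{u_3,u_4\}$ would be a clique of size $|L_1|+1$; hence some $u_2$ is adjacent to exactly one of $u_3,u_4$, say $u_3$. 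Such a $u_2$ lies in $L_1'$ (vertices of $I$ are adjacent to both) and differs from $u_1$, so $u_1u_2u_3u_4$ is an induced $P_4$. Because $I$ is complete to all four of these vertices and $|I|\ge n$, the set $\{u_1,u_2,u_3,u_4\}$ together with any $n$ vertices of $I$ induces a $P_4\vee K_n$, a contradiction. The symmetric statement for $L_2'$ follows by exchanging $L_1$ and $L_2$.

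The only delicate point is the bookkeeping around the overlap: one must notice that the intersection $I$ plays a dual role — it is the guaranteed $K_n$ that produces the forbidden $P_4\vee K_n$ when it is large, and it is exactly the mass that, once removed, shrinks $L_1,L_2$ to the threshold of \autoref{clm:case2_3p+3} when it is small. The bound $4n+2$ in the hypothesis is calibrated precisely so that the cutoff $|I| = n$ between the two cases still leaves residual cliques of size at least $3n+3$. Checking that the vertices used ($u_1,u_2 \in L_1\setminus L_2$, $u_3,u_4 \in L_2\setminus L_1$, and the chosen vertices of $I$) are genuinely distinct and that $I$ is complete to the $P_4$ is the main thing to verify carefully, but it is routine once the case split is in place.
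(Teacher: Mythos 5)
Your proof is correct and follows essentially the same route as the paper's: split on whether $|L_1\cap L_2|\le n-1$ (reduce to \autoref{clm:case2_3p+3} applied to $L_1\setminus L_2$ and $L_2\setminus L_1$) or $|L_1\cap L_2|\ge n$ (find $u_1,u_2\in L_1\setminus L_2$ and $u_3,u_4\in L_2\setminus L_1$ inducing a $P_4$ and use $n$ vertices of the intersection to build a forbidden $P_4\vee K_n$). Your write-up is in fact more careful than the paper's, which asserts the small-intersection reduction without the verification that $\omega(L_1'\cup L_2')=|L_1'|\ge 3n+3$ that you supply.
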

\begin{proof}
If $|L_1\cap L_2|\leq n-1$, then it follows from \autoref{clm:case2_3p+3}. So we may assume that $|L_1\cap L_2|\geq n$. Since $\omega(L_1\cup L_2)=|L_2|$, each vertex in $L_1\setminus L_2$ has at least one nonneighbour in $L_2\setminus L_1$. Suppose that $u_1\in L_1\setminus L_2$ has two nonneighbours $u_3,u_4\in L_2\setminus L_1$. Since $G$ is $2K_2$-free and $\omega(L_1\cup L_2)=|L_2|$, there is a vertex $u_2\in L_1\setminus L_2$ which is adjacent to exactly one of $u_3$ and $u_4$, say $u_3$. Therefore, $\{u_1,u_2,u_3,u_4\}$ induces a $P_4$, then $\{u_1,u_2,u_3,u_4\}\cup (L_1\cap L_2)$ contains a $P_4\vee K_n$.
\end{proof}

Let $X_0=\{u\in B_1: u$ and $H\setminus B_1$ are complete.\}, then $\omega(X_0)\geq\max\{g_{n-1}, 4n+2\}$ by \autoref{clm:case2_omegaofcomplete}. Let $X$ be a maximal set such that $X_0\subseteq X\subseteq B_1$ and $\omega(X)=\omega(X_0)$, and $Y=H\setminus X$. By the maximality, for every $u\in Y\cap B_1$ we have $\omega(N_X(u))=\omega(X)$.

\begin{claim}\label{clm:case2_1diff}
Let $L_1\subseteq X$ be a clique on $\omega(X)$ vertices, and $u\in Y\cap B_1$. There is a clique $L_2\subseteq N_X(u)$ on $\omega(X)$ vertices such that $|L_1\cap L_2|\geq \omega(X)-1$.
\end{claim}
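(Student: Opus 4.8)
The plan is to exploit the equality $\omega(N_X(u))=\omega(X)$ together with \autoref{clm:case2_4p+2}. First I would fix a clique $K\subseteq N_X(u)$ with $|K|=\omega(X)$, which exists precisely because $\omega(N_X(u))=\omega(X)$. Since $L_1,K\subseteq X$ and $\omega(X)=|L_1|=|K|\geq 4n+2$, we have $\omega(L_1\cup K)=\omega(X)$, so \autoref{clm:case2_4p+2} applies to the pair $L_1,K$: every vertex of $L_1\setminus K$ has exactly one nonneighbour in $K\setminus L_1$ and vice versa, so the nonadjacency relation restricts to a perfect matching between $L_1\setminus K$ and $K\setminus L_1$.

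The decisive observation is that $L_1\cap L_2\subseteq N_{L_1}(u)$ for any $L_2\subseteq N_X(u)$, so the desired conclusion $|L_1\cap L_2|\geq\omega(X)-1$ forces $u$ to have at most one nonneighbour in $L_1$; establishing this is the heart of the proof. I would argue by contradiction: suppose $u$ has two nonneighbours $w_1,w_2\in L_1$. Since $K\subseteq N(u)$, both lie in $L_1\setminus K$, so the matching assigns them distinct partners $k_1,k_2\in K\setminus L_1$ with $w_1k_1,w_2k_2\notin E$ but $w_1k_2,w_2k_1\in E$. Then $\{w_1,w_2,k_1,u\}$ induces a $P_4$, namely the path $w_1-w_2-k_1-u$, and the clique $K\setminus\{k_1,k_2\}$, of size $\omega(X)-2\geq n$, is complete to all four of these vertices (each of its vertices lies in $K\subseteq N(u)$, is adjacent to $k_1$ in the clique $K$, and avoids $k_1,k_2$ so is adjacent to $w_1$ and $w_2$). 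This yields an induced $P_4\vee K_n$, contradicting that $G$ is $P_4\vee K_n$-free.

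With the bound on nonneighbours in hand, the construction of $L_2$ is routine. If $u$ is complete to $L_1$ I would simply take $L_2=L_1$. Otherwise let $w$ be the unique nonneighbour of $u$ in $L_1$; then $w\in L_1\setminus K$, so the matching supplies a partner $w''\in K\setminus L_1$ whose only nonneighbour in $L_1\setminus K$ is $w$, so $w''$ is adjacent to all of $(L_1\setminus K)\setminus\{w\}$ and, being in $K$, to all of $L_1\cap K$. Hence $w''$ is complete to $L_1\setminus\{w\}$, and $L_2:=(L_1\setminus\{w\})\cup\{w''\}$ is a clique of size $\omega(X)$ contained in $N_X(u)$ with $|L_1\cap L_2|=\omega(X)-1$. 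The only genuinely delicate point is the $P_4\vee K_n$ argument above; the remainder is bookkeeping about the matching structure delivered by \autoref{clm:case2_4p+2}.
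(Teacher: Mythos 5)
Your proof is correct and follows essentially the same route as the paper's: fix a maximum clique in $N_X(u)$, invoke \autoref{clm:case2_4p+2} to get the (matching) structure of non-edges, rule out two nonneighbours of $u$ in $L_1$ by building a $P_4\vee K_n$ from $u$, the two nonneighbours, and a matched partner, and finally obtain $L_2$ by swapping the unique nonneighbour for its partner. Your explicit treatment of the distinctness of the partners $k_1,k_2$ is a small point the paper leaves implicit, but the argument is the same.
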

\begin{proof}
If $u$ and $L_1$ are complete, then we are done. Now we assume that $u$ and $L_1$ are not complete. Let $L_3\in N_X(u)$ be a clique on $\omega(X)$ vertices. Suppose that $u$ has two nonneighbours $a_1,a_2\in L_1$. By \autoref{clm:case2_4p+2}, there are $a_3,a_4\in L_3$ such that $a_1a_3,a_2a_4\notin E$. Let $K\subseteq L_3\setminus\{a_3,a_4\}$ be a clique on $n$ vertices, then $\{u,a_3,a_2,a_1\}\cup K$ induces a $P_4\vee K_n$.

So $u$ has exactly one nonneighbour in $L_1$, say $a_1$. Let $a_3\in L_3$ such that $a_1a_3\notin E$, then $(L_1\setminus\{a_1\})\cup\{a_3\}$ is a clique on $\omega(X)$ vertices in $N_X(u)$.
\end{proof}

\begin{claim}\label{clm:case2_klcomplete}
Let $L\subseteq Y$ be a clique, then there is a clique $K\subseteq X$ on $\omega(X)$ vertices such that $K$ and $L$ are complete.
\end{claim}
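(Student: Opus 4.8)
The plan is to produce a single maximum clique of $X$ lying in the common neighbourhood of all of $L$. Partition $L = L_a \cup R \cup P$, where $L_a = L\cap(S_1\cup B_2\cup B_5)$, $R = L\cap(D\cup F\cup S_3\cup S_4)$ and $P = L\cap(B_1\setminus X)$; this is indeed a partition because $L\subseteq Y = H\setminus X$. Since $L_a$ is complete to $B_1\supseteq X$, its vertices constrain nothing, so it suffices to find a maximum clique of $X$ complete to $R\cup P$. Writing $P=\{u_1,\dots,u_p\}$, I would set
\[
W_j = X\cap\bigcap_{v\in R}N(v)\cap\bigcap_{i\le j}N(u_i)\qquad(0\le j\le p),
\]
so that $W_p$ is precisely the set of vertices of $X$ complete to $R\cup P$, and prove by induction on $j$ that $\omega(W_j)=\omega(X)$. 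Any maximum clique $K\subseteq W_p$ is then complete to $R\cup P\cup L_a = L$, as required.

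For the base case, $X_0$ is complete to $H\setminus B_1\supseteq R$ and satisfies $\omega(X_0)=\omega(X)$, so $X_0\subseteq W_0$ and hence $\omega(W_0)=\omega(X)$. For the inductive step, assume $\omega(W_j)=\omega(X)$ and pick a maximum clique $K_j\subseteq W_j$; then $K_j$ is a maximum clique of $X$ and $u:=u_{j+1}\in Y\cap B_1$. If $u$ is complete to $K_j$ we are done. Otherwise I would apply \autoref{clm:case2_1diff} with $L_1=K_j$ to get a maximum clique $K'=(K_j\setminus\{a_1\})\cup\{a_3\}\subseteq N_X(u)$, where $a_1$ is the unique nonneighbour of $u$ in $K_j$, the vertex $a_3$ is complete to $K_j\setminus\{a_1\}$, and $a_1a_3\notin E$. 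Since $K_j\setminus\{a_1\}\subseteq W_j\cap N(u)\subseteq W_{j+1}$ already has $\omega(X)-1$ vertices, everything reduces to showing $a_3\in W_{j+1}$, for then $K'\subseteq W_{j+1}$ witnesses $\omega(W_{j+1})=\omega(X)$.

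This last point is the heart of the argument and the step I expect to be the main obstacle. Suppose $a_3\notin W_{j+1}$. As $a_3\in X\cap N(u)$, the failure must be that $a_3v\notin E$ for some $v\in R\cup\{u_1,\dots,u_j\}$. I claim $\{a_3,u,v,a_1\}$ then induces a $P_4$ in the order $a_3-u-v-a_1$: we have $a_3u\in E$, $uv\in E$ (as $u,v\in L$ and $L$ is a clique) and $a_1v\in E$ (as $a_1\in W_j\subseteq N(v)$), whereas $a_3v$, $a_3a_1$ and $ua_1$ are nonedges. Furthermore each vertex of $K_j\setminus\{a_1\}$ is adjacent to all four of $a_3,u,v,a_1$ — to $a_1$ and $u$ because $K_j$ is a clique whose only nonneighbour of $u$ is $a_1$, to $v$ because $K_j\setminus\{a_1\}\subseteq W_j\subseteq N(v)$, and to $a_3$ by the choice of $a_3$. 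Since $|K_j\setminus\{a_1\}|=\omega(X)-1\ge n$ and these vertices are disjoint from $\{a_3,u,v,a_1\}$ (note $a_3\notin K_j$, while $u,v\notin X$ and $a_1$ is omitted), any $n$ of them together with $\{a_3,u,v,a_1\}$ induce a $P_4\vee K_n$, contradicting that $G$ is $P_4\vee K_n$-free. Hence $a_3\in W_{j+1}$, which closes the induction. The delicate part is simply to verify the six adjacencies of the $P_4$ and the completeness and disjointness of the $K_n$ simultaneously; this is exactly where the clique property of $L$, the defining conditions of $W_j$, and the output of \autoref{clm:case2_1diff} are combined.
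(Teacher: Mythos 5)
Your proof is correct and takes essentially the same route as the paper's: the paper inducts on $|L|$, peeling off one vertex of $L\cap B_1$ at a time (with the $L\cap B_1=\emptyset$ case handled via $X_0$, exactly your base case), and at each step applies \autoref{clm:case2_1diff} to swap one vertex of the current maximum clique, ruling out a broken adjacency with precisely your contradiction --- the paper's $\{r_2,u_1,u_2,r_1\}\cup(L_1\cap L_2)$ is your $\{a_3,u,v,a_1\}\cup(K_j\setminus\{a_1\})$. Your reformulation through the common-neighbourhood sets $W_j$, and your observation that vertices of $L\cap(S_1\cup B_2\cup B_5)$ need no attention since they are complete to $B_1\supseteq X$, are only organizational differences, not a different argument.
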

\begin{proof}
If $L\cap B_1=\emptyset$, then $L$ and $X_0$ are complete, and then we are done since $X_0\subseteq X$ and $\omega(X)=\omega(X_0)$. Now we assume that $L\cap B_1\neq\emptyset$. We prove by induction on $|L|$. The claim is true for $|L|=1$ by the maximality of $X$. Now we consider the case that $|L|=m$ ($m\geq 2$), and assume that the claim is true for $|L|\leq m-1$. Let $u_1\in L\cap B_1$. Let $L_1\subseteq X$ be a clique on $\omega(X)$ vertices which is complete to $L\setminus\{u_1\}$. If $u_1$ and $L_1$ are complete then we are done. Now we assume that $u_1$ and $L_1$ are not complete. 
By \autoref{clm:case2_1diff}, there is a clique $L_2\in N_X(u_1)$ on $\omega(X)$ vertices such that $L_1\setminus L_2=\{r_1\}$ and $L_2\setminus L_1=\{r_2\}$, and $r_1r_2,r_1u_1\notin E$. If $r_2$ and $L\setminus\{u_1\}$ are complete then we are done. So assume that $u_2\in L\setminus\{u_1\}$ is nonadjacent to $r_2$, then $\{r_2,u_1,u_2,r_1\}\cup(L_1\cap L_2)$ contains a $P_4\vee K_n$.
\end{proof}

As a corollary of \autoref{clm:case2_klcomplete}, $\omega(H)=\omega(X)+\omega(Y)$. Suppose that $P\subseteq Y$ induces a $P_4$, by \autoref{clm:case2_1diff}, there is a clique in $X$ on $n$ vertices which is complete to $P$. So $Y$ is $P_4$-free and therefore is perfect. Since $X\subseteq B_1\subseteq N(v_1)$, we have that $X$ is $P_4\vee K_{n-1}$-free, and $\chi(X)=\omega(X)$ since $\omega(X)\geq g_{n-1}$. So $\chi(H)\leq\chi(X)+\chi(Y)=\omega(X)+\omega(Y)=\omega(H)$, and then $\chi(G)=\omega(G)$ by \autoref{clm:case2_htog}.
\end{proof}

\section{Conclusions}\label{sec:conc}

\autoref{thm:dividing} is an almost complete characterization for the near optimal colourability for ($H_1,H_2$)-free graphs. The open cases left are that $H_1$ is a forest while $H_2\in \mathcal{X}\setminus\mathcal{X}'$, that is, $H_2 = K_n, K_n-e$ or a paw. Since a graph $G$ is paw-free if and only if each component of $G$ is $K_3$-free or complete multipartite~\cite{Ol88}, the case that $H_2$ is a paw can be reduced to the case that $H_2=K_3$. Clearly, the family of ($H_1,K_n$)-free graphs is near optimal colourable for every $n$ if and only if the family of $H_1$-free graphs is $\chi$-bounded. So the Gy{\'a}rf{\'a}s conjecture is equivalent to that each graph family of ($H_1, H_2$)-free graphs with $H_1$ being a forest and $H_2 = K_n$ is near optimal colourable. The other open case is that $H_2 = K_n-e$ with $n\geq 4$. Since $K_n-e$ is an induced subgraph of $P_4\vee K_{n-2}$, we conclude that the family of ($2K_2, K_n-e$)-free graphs is near optimal colourable by \autoref{thm:2k2p4kp}. By \autoref{lem:2k23k1}, it suffices to consider the case that $H_1$ is a forest with independent number at least 3 and $H_2=K_n-e$.
%We list the other open case as below.

%Since $K_n$ is an induced subgraph of $K_{n+1}-e$, we believe that Case 2 is as difficult as Case 1. As such, we list Case 2 as an open problem below.

%Let $\mathcal{G}$ be the family of ($H_1,H_2$)-free graphs. If we cannot decide whether $\mathcal{G}$ is near optimal colourable by \autoref{thm:dividing}(1)(2)(3), then by \autoref{lem:2k23k1}, it must be the case that one of $H_1$ and $H_2$ is a forest while the other is a complement of a linear forest, and neither $H_1$ nor $H_2$ is an induced subgraph of $P_4$. Since $H_1$ and $H_2$ are exchangeable, we may assume that $H_1$ is a forest, and $H_2$ a complement of a linear forest, and then we may use \autoref{thm:dividing}(4). We cannot   decide whether $\mathcal{G}$ is near optimal colourable only if $H_2$ is isomorphic to one of $K_n$, $K_n-e$ and $(K_2+K_1)\vee K_1$ for some $n$, as the following three cases:

%\case{1} $H_1$ is a forest and is not an induced subgraph of $P_4$. $H_2=K_n$ for some $n\geq 3$.

%\case{2} $H_1$ is a forest and is not an induced subgraph of $P_4$. $H_2=K_n-e$ for some $n\geq 4$.

%\case{3} $H_1$ is a forest and is not an induced subgraph of $P_4$. $H_2$ is a paw.

\begin{problem}\label{prob:decidecase2}
Decide whether the family of ($H_1,H_2$)-free graphs is near optimal colourable when $H_1$ is a forest with independent number at least 3 and $H_2=K_n-e$.
\end{problem}

Gy{\'a}rf{\'a}s conjecture is a major open problem in graph colouring, and only few partial results are known. 
%For example, Gravier, Ho{\` a}ng, and Maffray \cite{GHM03} proved that every $P_t$-free graph $G$ for $t\geq 4$ with $\omega(G)\geq 2$ has $\chi(G)\leq (t-2)^{\omega(G)-1}$. 
See~\cite{SR19, SS20} for more results on the Gy{\'a}rf{\'a}s conjecture. Since $K_n$ is an induced subgraph of $K_{n+1}-e$, we believe that \autoref{prob:decidecase2} is as difficult as the Gy{\'a}rf{\'a}s conjecture.
Our results on ($P_6$,diamond)-free graphs~\cite{GHJM23} solves a subproblem of \autoref{prob:decidecase2}.

\noindent {\bf Acknowledgement.} We thank Dani{\"e}l Paulusma for pointing to us reference \cite{DP18}.

\end{document}